\newtheorem{thm}{Theorem}[section]
\newtheorem{lem}[thm]{Lemma}
\newtheorem{prop}[thm]{Proposition}
\newtheorem{defn}[thm]{Definition}
\newtheorem{conj}[thm]{Conjecture}
\newtheorem{example}[thm]{Example}
\newcommand{\enuma}[1]{\begin{enumerate}[\textup{(}a\textup{)}] {#1} \end{enumerate}}
\newcommand{\mr}{\mathrm}
\newcommand{\mc}{\mathcal}
\newcommand{\mf}{\mathfrak}
\newcommand{\Z}{\mathbb Z}
\newcommand{\R}{\mathbb R}
\newcommand{\C}{\mathbb C}
\newcommand{\ochi}{\omega \otimes \chi}
\newcommand{\inp}[2]{\langle #1 \,,\, #2 \rangle}
\newcommand{\matje}[4]{\left(\begin{smallmatrix} #1 & #2 \\ 
#3 & #4 \end{smallmatrix}\right)}
\newcommand{\cent}{C}
\newcommand{\Lpar}{\Psi}
\def\bdd{{\rm bdd}}
\def\cpt{{\rm cpt}}
\def\nr{{\rm nr}}
\def\unr{{\rm unr}}
\def\red{{\rm red}}
\def\rel{{\rm rel}}
\def\semis{{\rm ss}}
\def\Hom{{\rm Hom}}
\def\End{{\rm End}}
\def\Irr{{\rm Irr}}
\def\pro{{\rm pr}}
\def\Gal{{\rm Gal}}
\def\GL{{\rm GL}}
\def\PGL{{\rm PGL}}
\def\Fr{{\rm Frob}}
\def\SL{{\rm SL}}
\def\SO{{\rm SO}}
\def\St{{\rm St}}
\def\triv{{\rm triv}}
\def\cS{{\mathcal S}}
\def\bW{{\mathbf W}}
\begin{document}

\title[Local Langlands correspondence]{On the local Langlands correspondence \\
for non-tempered representations}

\author[A.-M. Aubert]{Anne-Marie Aubert}
\address{Institut de Math\'ematiques de Jussieu -- Paris Rive-Gauche, 
U.M.R. 7586 du C.N.R.S., U.P.M.C., 4 place Jussieu 75005 Paris, France}
\email{aubert@math.jussieu.fr}
\author[P. Baum]{Paul Baum}
\address{Mathematics Department, Pennsylvania State University,  University Park, PA 16802, USA}
\email{baum@math.psu.edu}
\author[R. Plymen]{Roger Plymen}
\address{School of Mathematics, Southampton University, Southampton SO17 1BJ,  England \emph{and} 
School of Mathematics, Manchester University, Manchester M13 9PL, England}
\email{r.j.plymen@soton.ac.uk \quad plymen@manchester.ac.uk}
\author[M. Solleveld]{Maarten Solleveld}
\address{Radboud Universiteit Nijmegen, Heyendaalseweg 135, 6525AJ Nijmegen, the Netherlands}
\email{m.solleveld@science.ru.nl}

\date{\today}
\subjclass[2010]{20G05, 22E50}
\keywords{reductive $p$-adic group, representation theory, R-group, local Langlands conjecture}

\maketitle

\begin{abstract}
Let $G$ be a reductive $p$-adic group. We study how a local Langlands correspondence for
irreducible tempered $G$-representations can be extended to a local Langlands correspondence
for all irreducible smooth representations of $G$. We prove that, under a natural condition
involving compatibility with unramified twists, this is possible in a canonical way.

To this end we introduce analytic R-groups associated to non-tempered essentially
square-integrable representations of Levi subgroups of $G$. We establish the basic properties
of these new R-groups, which generalize Knapp--Stein R-groups.
\end{abstract}

\tableofcontents

\section*{Introduction}

Let $F$ be a local nonarchimedean field and let $G$ be the group of $F$-rational points
of a connected reductive group which is defined over $F$. Let $\Irr (G)$ be the space of 
irreducible smooth $G$-representations and let 
$\Phi (G)$ be the space of conjugacy classes of Langlands parameters for $G$.
The local Langlands correspondence (LLC) conjectures that there exists an explicit map 
\[
\Irr (G) \to \Phi (G)
\]
which satisfies several naturality properties \cite{Bor}. 
The collection of representations that correspond to a fixed $\phi \in \Phi (G)$ is
known as the L-packet $\Pi_\phi (G)$ and should be finite. A more subtle version 
of the LLC \cite{Vog,Art3}, which for unipotent representations stems from \cite{Lus}, 
asserts that the members of $\Pi_\phi (G)$ can be
parametrized by some irreducible representations $\rho$ of a finite group $\mc{S}_\phi$. 
This leads to a space $\Phi^e (G)$ of enhanced Langlands parameters $(\phi,\rho)$,
and the LLC then should become an injection
\[
\Irr (G) \to \Phi^e (G). 
\]
The proofs of
the LLC for $\GL_n (F)$ \cite{LRS,HaTa,Hen2} are major results. Together with the
Jacquet--Langlands correspondence these provide the LLC for inner forms of $\GL_n (F)$,
see \cite{HiSa,ABPS}. (This has been known for a long time already, but was apparently 
not published earlier.) Recently there has been
considerable progress on the LLC for inner forms of $\SL_n (F)$ \cite{HiSa}
and for quasi-split classical groups \cite{Art4,Mok}. The LLC has been
established for a large class of representations of these groups, including the
collection $\Irr^t (G)$ of irreducible tempered representations.

In general it is expected that is easier to prove the LLC for tempered representations
of a $p$-adic group $G$ than for all irreducible representations. The main reason is 
that every irreducible tempered $G$-representation is unitary and appears as a direct
summand of the parabolic induction of some essentially square-integrable representation.

Therefore a method to generalize the LLC from $\Irr^t (G)$
to $\Irr (G)$ is useful. The aim of this paper is to provide such a method, which is
simple in comparison with the aforementioned papers. The idea is based on the
Langlands classification and to some extent already present in \cite{BrPl,Art3,Sol1}. 
It applies to all reductive groups over local non-archimedean fields. 
Recall that a part of Langlands' conjectures is that $\Irr^t(G)$ corresponds 
to the set $\Phi_{\bdd}(G)$ of bounded Langlands parameters (modulo conjugacy). 
\\[2mm]
\textbf{Theorem \ref{thm:4.2}.} 
\emph{Suppose that a tempered local Langlands correspondence is given as an injective map
$\Irr^t (G) \to \Phi^e_{\bdd} (G)$, which is compatible with twisting by unramified
characters whenever this is well-defined. Then the map extends canonically to a local
Langlands correspondence} $\Irr (G) \to \Phi^e (G)$. \\[1mm]

The main novelty of the paper is the introduction of analytic R-groups for non-tempered
representations (see Definition~\ref{defn:analRgroup} and Theorem~\ref{thm:2.5}). 
These objects, natural generalizations of 
R-groups defined (in the $p$-adic case) by Silberger \cite{Sil1}, open up new ways to compare 
$\Irr^t (G)$ with $\Irr (G)$. Roughly speaking, $\Irr (G)$ is obtained from $\Irr^t (G)$ 
by "complexification" (Proposition \ref{prop:2.6}).

We show that the relation between $\Phi^e_{\bdd}(G)$ and $\Phi^e (G)$ is similar 
(Proposition \ref{prop:1.2}).
As these spaces are not algebraic varieties, a large part of the proof consists of making 
the term "complexification" precise in this context. We do this by constructing suitable 
algebraic families of irreducible representations and of enhanced Langlands parameters.

In Section \ref{sec:geometric} we conjecture how our analytic R-groups are related to
geometric R-groups. This should enable one to produce a LLC for $\Irr (G)$ if the 
Langlands parameters corresponding to essentially square-integrable representations
of Levi subgroups of $G$ are known.

With this in mind we check that the hypotheses of Theorem \ref{thm:4.2} are fulfilled 
in some known cases, in particular for the principal series of a split reductive 
$p$-adic group.

\section{Analytic R-groups for non-tempered representations}
\label{sec:Rgroup}

Let $F$ be a local nonarchimedean field and let $\mathcal G$ be a connected reductive 
algebraic group defined over $F$. We consider the group $G = \mathcal G (F)$ of
$F$-rational points.
Let $P$ be a parabolic subgroup of $G$ with Levi factor $M$, and let $A$ be the maximal
$F$-split torus in the centre of $M$. Then $M = Z_G (A)$ and $N_G (M) = N_G (A)$. 
The Weyl group of $M$ and $A$ is
\[
W(M) = W(A) = N_G (M) / M = N_G (A) / M .
\]
It acts on equivalence classes of $M$-representations by 
\begin{equation}\label{eq:2.1}
(w \cdot \pi)(m) = (\bar w \cdot \pi)(m) = \pi ({\bar w}^{-1} m \bar w), 
\end{equation}
for any representative $\bar w \in N_G (M)$ of $w \in W(M)$.
The isotropy group of $\pi$ is
\[
W_\pi := \{ w \in W(M) : w \cdot \pi \cong \pi \} .
\]
Let $M^1$ be the subgroup of $M$ generated by all compact subgroups of $M$.
Then $M / M^1$ is a lattice and a character of $M$ is unramified if and only if it
factors through $M / M^1$. Let $X_{\nr}(M)$ be the group of unramified characters of $M$ 
and let $X_{\unr}(M)$ be the subgroup of unitary unramified characters. The above 
provides $X_{nr}(M)$ with the structure of a complex torus, such that $X_{\unr}(M)$ is its 
maximal compact subgroup. 

In this paper all representations of $p$-adic groups are tacitly assumed to be smooth.
Let $I_P^G$ be the functor of smooth, normalized parabolic induction, from 
$M$-re\-pre\-sen\-ta\-tions to $G$-representations. The following result is well-known, we
include the proof for a lack of a good reference.

\begin{lem}\label{lem:2.1}
Let $\pi$ be a finite length $M$-representation and take $w \in W(M)$.
Let $P' \subset G$ be another parabolic subgroup with Levi factor $M$. Then the
$G$-representations $I_P^G (\pi) ,\; I_P^G (w \cdot \pi)$ and $I_{P'}^G (\pi)$
have the same trace and the same irreducible constituents, counted with multiplicity.
\end{lem}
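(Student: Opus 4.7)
The plan is to split the lemma into two assertions and reduce one to the other. First I would handle the Weyl element by exhibiting an explicit isomorphism, which collapses everything to the change-of-parabolic statement that $I_P^G(\sigma)$ and $I_{P'}^G(\sigma)$ represent the same class in the Grothendieck group for any two parabolics $P, P'$ with common Levi $M$ and any finite-length $\sigma$ on $M$.

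Let $\bar w \in N_G(M)$ be a lift of $w$ and set $P'' := \bar w P \bar w^{-1}$; since $\bar w$ normalizes $M$, $P''$ is a parabolic of $G$ with Levi factor $M$. The assignment
\[
\Phi \colon I_P^G(\pi) \longrightarrow I_{P''}^G(w \cdot \pi), \qquad \Phi(f)(g) = f(\bar w^{-1} g),
\]
is an honest $G$-equivariant isomorphism, as one verifies directly from the convention $(w \cdot \pi)(m) = \pi(\bar w^{-1} m \bar w)$ in \eqref{eq:2.1} together with the identity $\delta_P(\bar w^{-1} p \bar w) = \delta_{P''}(p)$, itself immediate from the fact that $\mr{Ad}(\bar w)$ identifies the Lie algebras of the unipotent radicals of $P$ and $P''$. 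Thus $[I_P^G(\pi)] = [I_{P''}^G(w \cdot \pi)]$ in the Grothendieck group, and the first equality of the lemma reduces to $[I_{P''}^G(w \cdot \pi)] = [I_P^G(w \cdot \pi)]$, which is an instance of the change-of-parabolic statement applied to $w \cdot \pi$.

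For that second statement I would use the standard meromorphic family of intertwining operators
\[
J_{P'|P}(\sigma \otimes \chi) \colon I_P^G(\sigma \otimes \chi) \longrightarrow I_{P'}^G(\sigma \otimes \chi), \qquad \chi \in X_{\nr}(M),
\]
defined by a convergent integral on a suitable cone and meromorphically continued to all of the complex torus $X_{\nr}(M)$. On a Zariski-open dense subset of $X_{\nr}(M)$ this operator is an isomorphism, so $\mr{tr}\, I_P^G(\sigma \otimes \chi)(f) = \mr{tr}\, I_{P'}^G(\sigma \otimes \chi)(f)$ there for every $f \in C_c^\infty(G)$. For fixed $f$, each side is a regular function of $\chi$ (the character of $\sigma \otimes \chi$ on $M$ depends regularly on $\chi$, and the induced character is obtained by integrating it against a $\chi$-independent kernel), so the equality extends to all $\chi$, in particular to $\chi = 1$. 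Finally, equality of distributional characters of finite-length representations forces equality of irreducible constituents with multiplicity, by linear independence of the characters of irreducible smooth representations.

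The main obstacle is the meromorphic continuation and generic invertibility of $J_{P'|P}(\sigma \otimes \chi)$, with proper normalization and control of poles; this is classical but nontrivial in the $p$-adic setting (going back to work of Harish-Chandra, Bernstein, and Silberger). The algebraicity of the induced character in the unramified parameter and the explicit conjugation isomorphism $\Phi$ are essentially routine once one sets up the bookkeeping carefully.
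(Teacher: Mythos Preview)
Your proposal is correct and follows essentially the same route as the paper: reduce the Weyl twist to a change of parabolic via the explicit conjugation isomorphism, then compare traces in the family $\chi \mapsto I_P^G(\sigma \otimes \chi)$, obtain equality on a Zariski-dense set, and conclude by regularity and linear independence of characters. The only cosmetic difference is that the paper cites \cite[Th\'eor\`eme IV.1.1]{Wal} for the generic equality of traces, whereas you unpack the mechanism behind it (generic invertibility of the standard intertwining operators $J_{P'|P}$); the content is the same.
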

\begin{proof}
Conjugation with a representative $\bar w \in N_G (M)$ for $w$ yields an isomorphism 
$I_P^G (w \cdot \pi) \cong I_{w^{-1} P w}^G (\pi)$.
The parabolic subgroup $w^{-1} P w \subset G$ has $M = w^{-1} M w$ as a Levi 
factor, so without loss of generality we may assume that it equals $P'$.

Since $I_P^G (\pi)$ and $I_{P'}^G (\pi)$ have finite length \cite[6.3.8]{Cas}
their irreducible constituents (and multliplicities) are determined by 
their traces \cite[2.3.3]{Cas}. Therefore it suffices to show that the function
\begin{align*}
& C_c^\infty (G) \times X_{\nr}(M) \to \C, \\
& (f,\chi) \mapsto \mathrm{tr}(f,I_P^G (\pi \otimes \chi)) - 
\mathrm{tr}(f, I_{P'}^G (\pi \otimes \chi))
\end{align*}
is identically zero. For a fixed $f \in C_c^\infty (G)$ this is a rational function
on $X_{\nr}(M)$, which by \cite[Th\'eor\`eme IV.1.1]{Wal} vanishes on a
Zariski-dense subset of $X_{\nr}(M)$. Hence it vanishes everywhere.
\end{proof}

Let $X^* (A)$ and $X_* (A)$ be the character (respectively cocharacter) lattice
of $A$. Since $A / (A \cap M^1) \cong X_* (A)$ is of finite index in $M / M^1$,
the restriction map $X_{\nr}(M) \to X_{\nr}(A)$ is surjective and has finite
kernel. In particular there are natural isomorphisms
\[
\{ \chi \in X_{\nr}(M) : \chi (M) \subset \R_{>0} \} \xrightarrow{res}
\Hom_\Z (X_* (A),\R_{>0}) \xrightarrow{log}
X^* (A) \otimes_\Z \R := \mf a^* .  
\]
We note that $\mf a^*$ is a real vector space containing the root system $R(G,A)$.
We say that $\chi \in X_{\nr}(M)$ is positive with respect to $P$ if
\begin{equation}\label{eq:2.10}
\inp{\alpha^\vee}{\log |\chi|} \geq 0 \text{ for all } \alpha \in R (P,A) .
\end{equation}
Let $M(\chi)$ be the maximal Levi subgroup of $G$ such that 
\begin{itemize}
\item $M(\chi) \supset M$ and the split part of $Z(M(\chi))$ is contained in $A$;
\item $\chi$ is unitary on $M \cap M(\chi)_{der}$.
\end{itemize}
Assume that $\pi$ is irreducible and tempered. In particular it is unitary. 
Then $I_{M(\chi) \cap P}^{M(\chi)}(\pi \otimes \chi)$ is completely 
reducible, because its restriction to $M(\chi)_{der}$ is unitary. For every 
irreducible summand $\tau$ of $I_{M(\chi) \cap P}^{M(\chi)}(\pi \otimes \chi)$
the pair $(P M(\chi),\tau)$ satisfies the hypothesis of the Langlands classification
\cite{BoWa,Kon}, so $I_{P M(\chi)}^G (\tau)$ is indecomposable and has a 
unique irreducible quotient $L(P M(\chi),\tau)$. We call the $L(P M(\chi),\tau)$,
for all eligible $\tau$, the Langlands quotients of $I_P^G (\pi \otimes \chi)$.
This subset of $\Irr (G)$ depends only $(M,\pi \otimes \chi)$, because $M(\chi)$
and $P M(\chi)$ are uniquely determined by $\log |\chi|$. We denote it by 
$\Irr_{M,\pi \otimes \chi}(G)$. 

In fact $I_P^G (\pi \otimes \chi)$ is completely reducible for $\chi$ in a 
Zariski-dense subset of $X_{\nr}(M)$. In that case $\Irr_{M,\pi \otimes \chi}(G)$ 
consists of all the consituents of $I_P^G (\pi \otimes \chi)$.

The uniqueness part of the Langlands classification tells us that 
$L(P M(\chi),\tau)$ is tempered if and only if $M(\chi) = G$ and $\tau$ is
tempered. This is so if and only if $\chi$ is unitary, in which case actually
all members of $\Irr_{M,\pi \otimes \chi}(G)$ are tempered.

By Lemma \ref{lem:2.1} the elements of $\Irr_{M,\pi \otimes \chi}(G)$ are also 
constituents of $I_{P'}^G (\pi \otimes \chi)$, so it is justified to call them the 
Langlands constituents of $I_{P'}^G (\pi \otimes \chi)$ for any parabolic subgroup
$P' \subset G$ containing $M$. 

Harish-Chandra showed that every irreducible tempered representation can be obtained 
as a direct summand of the parabolic induction of a square-integrable (modulo centre) 
representation, in an essentially unique way \cite[Proposition III.4.1]{Wal}. 
These considerations lead to the following result.

\begin{thm}\label{thm:2.2} 
\cite[Theorem 2.15]{Sol1} \
\enuma{
\item For every $\pi \in \Irr (G)$ there exist $P,M,\chi$ as above and a
square-integrable (modulo centre) representation $\omega \in \Irr (M)$, such that 
$\pi \in \Irr_{M,\ochi}(G)$.
\item The pair $(M,\ochi)$ is unique up to conjugation.
\item $\pi$ is tempered if and only if $\chi$ is unitary.
} 
\end{thm}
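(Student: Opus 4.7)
The plan is to combine two classical decomposition results. On the one hand, the Langlands classification (cited above via \cite{BoWa,Kon}) realizes any $\pi \in \Irr(G)$ as the unique irreducible quotient of $I_Q^G(\tau_0 \otimes \nu)$ for some standard parabolic $Q = L' U_Q$, a tempered $\tau_0 \in \Irr^t(L')$, and an unramified character $\nu$ whose logarithm lies strictly in the positive chamber for $Q$. On the other hand, Harish-Chandra's structure theorem \cite[Proposition III.4.1]{Wal}, quoted just above the theorem, writes any such $\tau_0$ as a direct summand of $I_{P_1}^{L'}(\omega)$ for some parabolic $P_1 \subset L'$ with Levi $M$ and some essentially square-integrable $\omega \in \Irr(M)$. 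Transitivity of parabolic induction should glue these into a single triple $(M,\omega,\chi)$ satisfying (a); the two uniqueness statements should then combine into (b); and (c) will come from the uniqueness part of the Langlands classification already recalled in the text.

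For (a), given $\pi \in \Irr(G)$, I would first apply the Langlands classification to obtain $(Q = L' U_Q, \tau_0, \nu)$, then apply Harish-Chandra to $\tau_0$ to produce $(M, P_1, \omega)$ with $M \subset L'$ and $P_1 \subset L'$ a parabolic with Levi $M$. Setting $P := P_1 U_Q$, a parabolic of $G$ with Levi $M$, and letting $\chi \in X_{\nr}(M)$ be the restriction of $\nu$, I would check that $\omega \otimes \chi$ is unitary on $M \cap L'_{\mathrm{der}}$, so that $I_{P \cap L'}^{L'}(\omega \otimes \chi)$ is completely reducible and contains $\tau_0 \otimes \nu$ as a direct summand. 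Strict positivity of $\log |\nu|$ should translate, via the maximality and unitarity conditions in the definition of $M(\chi)$, into the equality $M(\chi) = L'$, making $\pi = L(PM(\chi), \tau_0 \otimes \nu) \in \Irr_{M,\ochi}(G)$.

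For (b), uniqueness of $(M, \ochi)$ up to $G$-conjugation follows by composing the uniqueness in the Langlands classification (which pins down $(L', \tau_0 \otimes \nu)$ up to $G$-conjugation) with the uniqueness in Harish-Chandra's theorem (which pins down $(M, \omega)$ up to $L'$-conjugation inside $L'$). For (c), the uniqueness part of the Langlands classification recalled in the text yields that $L(PM(\chi), \tau)$ is tempered iff $M(\chi) = G$ and $\tau$ is tempered, which under the above construction is equivalent to $\nu = 1$, i.e.\ to $\chi$ being unitary.

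The most delicate step will be the polar-decomposition bookkeeping: a general $\chi \in X_{\nr}(M)$ factors as $\chi = \chi_u \chi_+$ with $\chi_u \in X_{\unr}(M)$ and $\chi_+$ real-positive, and one must absorb $\chi_u$ into $\omega$ (yielding another square-integrable-modulo-center representation $\omega \otimes \chi_u$ of $M$) in such a way that the positivity condition \eqref{eq:2.10} on $\chi_+$ exactly matches the strict positivity required by the Langlands classification for $L' = M(\chi)$. Beyond this matching, the proof should reduce to a routine exercise in transitivity of parabolic induction.
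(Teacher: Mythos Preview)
The paper does not give its own proof of this theorem; it cites \cite[Theorem 2.15]{Sol1} and indicates, in the sentence immediately preceding the statement, that the result follows by combining the Langlands classification with Harish-Chandra's theorem \cite[Proposition III.4.1]{Wal} on tempered representations. Your proposal carries out precisely this strategy and is correct; the polar-decomposition bookkeeping you flag is indeed routine once one observes that $\log|\chi|$ vanishes on the coroots of $M(\chi)$, so that $|\chi|$ extends to a positive unramified character of $M(\chi)$ and the Langlands datum $(M(\chi),\tau_0,\nu)$ is recovered.
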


Thus $\Irr (G)$ is partitioned in disjoint packets $\Irr_{M,\ochi}(G)$,
parametrized by conjugacy classes of Levi subgroups $M$ and $W(M)$-equivalence 
classes of essentially square-integrable representations $\ochi \in \Irr (M)$.

We remark that Theorem \ref{thm:2.2} is stronger than the Langlands
classification as formulated in \cite[IV.2]{BoWa} and \cite{Kon}. There the passage 
is from smooth representations to tempered representations, whereas in Theorem 
\ref{thm:2.2} the passage is from smooth representations to essentially 
square-integrable representations (all assumed irreducible of course).
On the other hand, the Langlands classification is one-to-one but
Theorem \ref{thm:2.2} is only finite-to-one.

Let $\omega \in \Irr (M)$ be square-integrable modulo centre and write
\begin{equation}\label{eq:2.12} 
\begin{aligned}
& \mc O = \{ \ochi \in \Irr (M) : \chi \in X_{\unr}(M) \} ,\\
& \mc O_\C = \{ \ochi \in \Irr (M) : \chi \in X_{\nr}(M) \} .
\end{aligned}
\end{equation}
The irreducible constituents of the $G$-representations $I_P^G (\ochi)$
with $\ochi \in \mc O$ make up a Harish-Chandra component
$\Irr_{\mc O} (G)$ of $\Irr^t (G)$, see \cite[\S 1]{SSZ}.
The group 
\[
X_{\nr}(M)_\omega := \{ \chi \in X_{\nr}(M) : \ochi \cong \omega \}
\]
is finite, and in particular consists of unitary characters. The bijection
\begin{equation}\label{eq:2.2}
X_{\nr}(M) / X_{\nr}(M)_\omega \to \mc O_\C : \chi \mapsto \ochi 
\end{equation}
provides $\mc O_\C$ with the structure of a complex torus, and $\mc O$ can be
identified with its maximal real compact subtorus. However, in general there
is no natural multiplication on $\mc O$ or $\mc O_\C$.

Let $W(\mc O)$ be the stabilizer of $\mc O$ in $W(M)$, with respect to the action
\eqref{eq:2.1}. It is also the stabilizer of $\mc O_\C$, and it acts on $\mc O$ 
and $\mc O_\C$ by algebraic automorphisms. 

Recall from \cite[\S 2]{Art1} that for every $w \in W(M)$ and every $\omega \otimes
\chi \in \mc O$ there exists a unitary intertwining operator
\begin{equation}\label{eq:2.3}
J(w,\ochi) \in \Hom_G \big( I_P^G (\ochi),
I_P^G (w \cdot (\ochi)) \big) .
\end{equation}
It is unique up to a complex number of norm 1. If $\chi_G$ is the restriction to $M$
of an unramified character of $G$, then the right hand side of \eqref{eq:2.3} is
$\Hom_G \big( I_P^G (\omega),I_P^G (w \cdot (\omega)) \big)$, so then we may take
\begin{equation}\label{eq:2.5}
J(w,\ochi_G) = J(w,\omega) .
\end{equation}
These operators can be normalized so that $\chi \mapsto J(w,\ochi)$ 
extends to a rational function on $\mc O_\C$. We fix such a normalization. 
It determines a rational function
$\kappa : W(M) \times W(M) \times \mc O_\C \to \C \cup \{\infty\}$ by
\begin{equation}\label{eq:2.4}
J(w,w' \cdot (\ochi)) \circ J(w',\ochi) =
\kappa (w,w',\ochi) J(w w', \ochi) .
\end{equation}
On $\mc O$ this function is regular and takes values of norm 1. By \eqref{eq:2.4}
this holds more generally for all twists of $\omega$ by unramified characters
of $M$ which are unitary on $M \cap G_{der}$. We let
\[
\kappa_{\ochi} : W_{\ochi} \times
W_{\ochi} \to \C \cup \{\infty \}
\] 
be the restriction of $\kappa$ to $W_{\ochi} \times
W_{\ochi} \times \{ \ochi \}$.

\begin{lem}\label{lem:2.3}
$\kappa_{\ochi}$ has neither poles nor zeros.
\end{lem}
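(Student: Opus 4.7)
The strategy is to descend from $G$ to the intermediate Levi subgroup $M(\chi)$: inside $M(\chi)$ the character $\chi$ is by construction unitary on $M \cap M(\chi)_{\mathrm{der}}$, so that $\ochi$ lies in the locus on which the discussion preceding the statement already yields regularity and unimodularity of the analogous cocycle $\kappa^{M(\chi)}$.

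The first step is to show $W_{\ochi} \subseteq W^{M(\chi)}(M) := N_{M(\chi)}(M)/M$. Since $\omega$ is square-integrable modulo centre, its central character on $A$ is unitary. For $w \in W_{\ochi}$, comparing central characters on $A$ on both sides of the isomorphism $w \cdot (\ochi) \cong \ochi$ therefore forces $w \cdot \log|\chi| = \log|\chi|$ in $\mf a^*$. By the standard description of Weyl stabilizers of vectors in $\mf a^*$, this stabilizer fixes the sub-torus $A_{M(\chi)} \subseteq A$ pointwise, so $\bar w \in Z_G(A_{M(\chi)}) = M(\chi)$, as required.

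The second step is transfer. By the transitivity $I_P^G = I_{PM(\chi)}^G \circ I_{P \cap M(\chi)}^{M(\chi)}$ of parabolic induction, together with the standard compatibility of the chosen normalization of intertwining operators with it, one has $J^G(w,\ochi) = I_{PM(\chi)}^G \bigl( J^{M(\chi)}(w,\ochi) \bigr)$ whenever $\bar w \in M(\chi)$. Plugging this into \eqref{eq:2.4} shows that $\kappa^G_{\ochi} = \kappa^{M(\chi)}_{\ochi}$ on $W_{\ochi} \times W_{\ochi}$. But the latter is regular and of norm $1$ by the previous step combined with the remarks preceding the lemma, applied to $M(\chi)$ in place of $G$; hence it lies in $\mathbb C^\times$.

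The main difficulty is the first step: one must verify carefully that $w$-invariance of $\log|\chi|$ places $\bar w$ inside $M(\chi)$ and not merely in its normalizer. This rests on the standard root-theoretic fact that the stabilizer of a point of $\mf a^*$ under the relative Weyl group $W(M)$ is generated by reflections in the roots of $R(G,A)$ vanishing at that point, which are precisely the roots of $M(\chi)$. The second step depends on the compatibility of the chosen normalization of intertwining operators with transitivity of parabolic induction, a standard property of Arthur-type normalizations.
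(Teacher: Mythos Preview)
Your approach is exactly the paper's: reduce to the intermediate Levi $M(\chi)$, where the regularity of the intertwining operators (and hence of the cocycle) is already known from the unitary case, and transport this back to $G$ via $J(w,\ochi) = z\, I_{PM(\chi)}^G\bigl(J^{M(\chi)}(w,\ochi)\bigr)$.

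Two small points of precision. First, the ``standard root-theoretic fact'' you invoke for $W(M)$ is not literally available: $W(M)=N_G(A)/Z_G(A)$ is in general \emph{not} a Coxeter group on $\mf a^*$ (for instance $M=\GL_2\times\GL_3$ in $G=\GL_5$ gives $W(M)=1$ while $R(G,A)$ has a root), so stabilizers are not automatically reflection subgroups. The clean way to get $\bar w\in M(\chi)$ is to lift $w$ to $\tilde w\in W(G,A_0)$ for a maximal split torus $A_0\supset A$; then $\tilde w$ fixes $\log|\chi|_{A_0}\in\mf a_0^*$, and now the genuine Coxeter-group fact places $\tilde w$ in $W(M(\chi),A_0)$. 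Second, the paper only claims $J^G(w,\ochi)=z\, I_{PM(\chi)}^G\bigl(J^{M(\chi)}(w,\ochi)\bigr)$ for some $z\in\C^\times$, deduced from the fact that every positive root of $R(G,A)$ made negative by $w$ lies in $R(M(\chi),A)$; it does not appeal to any abstract compatibility of normalizations. This weaker statement already suffices, since it makes $J^G(w,\ochi)$ regular and invertible, whence \eqref{eq:2.4} gives $\kappa_{\ochi}(w,w')\in\C^\times$ directly.
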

\begin{proof}
Let $w \in W_{\ochi}$ and recall the definition of $M(\chi)$, below \eqref{eq:2.10}.
It shows that $w \in N_{M(\chi)}(M)/M$. As we saw above, the operator
\[
J_{M(\chi)}(w,\ochi) \in \End_{M(\chi)}\big( I_{P \cap M(\chi)}^{M(\chi)}
(\ochi) \big)
\]
is regular and invertible. Hence $I_{P M(\chi)}^G \big( J_{M(\chi)}(w,\omega \otimes 
\chi) \big)$ is invertible as well. But all the positive roots of $R(G,A)$ that are 
made negative by $w$ belong to $R(M(\chi),A)$, so 
\[
J(w,\ochi) = z I_{P M(\chi)}^G \big( J_{M(\chi)}(w,\omega \otimes 
\chi) \big) \text{ for some } z \in \C^\times .
\]
Now \eqref{eq:2.4} shows that $\kappa_{\ochi}(w,w') \in \C^\times$
for all $w,w' \in W_{\ochi}$.
\end{proof}

The associativity of the multiplication in $\End_G (I_P^G (\ochi)$ implies 
that $\kappa_{\ochi}$ is a 2-cocycle of $W_{\omega \otimes \chi}$. 
It gives rise to a twisted group algebra $\C [W_{\ochi},\kappa_{\ochi}]$. 
By definition this algebra has a basis $\{J_w : w \in W_{\ochi} \}$ 
and its multiplication is given by
\begin{equation}\label{eq:2.6}
J_w \cdot J_{w'} = \kappa_{\ochi}(w,w') J_{w w'} .
\end{equation}

Let $R_{\red}(G,A)$ be the reduced root system of $(G,A)$. Harish-Chandra's 
$\mu$-function determines a subset
\[
R_{\ochi} := \pm \{ \alpha \in R_{\red}(G,A) : \mu_\alpha (\omega
\otimes \chi) = 0 \} ,
\]
which is known to be a root system itself \cite[\S 1]{Sil1}. Its Weyl group
$W(R_{\ochi})$ is a normal subgroup of $W_{\ochi}$.
The parabolic subgroup $P$ determines a set of positive roots $R^+_{\omega 
\otimes \chi}$. Since $W_{\ochi}$ acts on $R_{\ochi}$,
it is known from the general theory of Weyl groups that the subgroup
\begin{equation}\label{eq:2.13}
\mf R_{\ochi} := \big \{ w \in W_{\ochi} : w (R^+_{\ochi}) = R^+_{\ochi} \big\}
\end{equation}
satisfies 
\begin{equation}\label{eq:2.11}
W_{\ochi} = \mf R_{\ochi} \ltimes W(R_{\ochi}) . 
\end{equation}

\begin{defn} \label{defn:analRgroup}
The group $ \mf R_{\ochi}$ is the analytic R-group attached to the
essentially square-integrable representation $\ochi\in\Irr(M)$.
\end{defn}

\begin{lem}\label{lem:2.4}
Let $Y$ be a connected subset of $\mc O_\C$ such that $W_{\ochi}$
is the same for all $\ochi \in Y$.
\enuma{
\item $R_{\ochi}$ and $\mf R_{\ochi}$ are independent
of $\ochi \in Y$, up to a natural isomorphism.
\item $\C [\mf R_{\ochi},\kappa_{\ochi}]$ and the projective
representation of $W_{\ochi}$ on $I_P^G (\ochi)$ are independent
of $\ochi \in Y$, up to an isomorphism which is determined by the normalization
of the intertwining operators $J_w$.
}
\end{lem}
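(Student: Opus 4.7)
For part (a), write $W_Y$ for the common value of $W_{\ochi}$, $\ochi \in Y$. The plan is to show first that $R_{\ochi}$, viewed as a subset of the fixed root system $R_{\red}(G,A)$, depends only on $W_{\ochi}$; the analogous statement for $\mf R_{\ochi}$ then follows at once from definition \eqref{eq:2.13}. The crucial input is Silberger's rank-one analysis \cite[\S 1]{Sil1}: for essentially square-integrable $\omega$, the zero locus of the rational function $\mu_\alpha$ on $\mc O_\C$ coincides with the subvariety $\{\ochi : s_\alpha \in W_{\ochi}\}$. Hence $R_{\ochi} = \pm\{\alpha \in R_{\red}(G,A) : s_\alpha \in W_{\ochi}\}$, which is a fixed subset of $R_{\red}(G,A)$ once $W_{\ochi}$ is fixed. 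Constancy of $R_{\ochi}$ forces constancy of the positive subsystem $R^+_{\ochi} = R_{\ochi} \cap R(P,A)$, and then \eqref{eq:2.13} identifies $\mf R_{\ochi}$ with the fixed subgroup $\{w \in W_Y : w(R^+) = R^+\}$ of $W_Y$; these identifications serve as the ``natural isomorphisms'' claimed in (a).

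For part (b), I would exploit the rationality of the normalized intertwining operators $J(w,\ochi)$ together with the discreteness of $H^2(W_Y,\C^\times)$. By Lemma~\ref{lem:2.3}, for each $\ochi \in Y$ and $w,w' \in W_Y$ the value $\kappa_{\ochi}(w,w')$ lies in $\C^\times$; as a rational function of $\ochi$, it is therefore regular and nonvanishing throughout $Y$. Realising $I_P^G(\ochi)$ in the compact picture (so that the underlying $K$-module, for $K$ a suitable maximal compact subgroup of $G$, is one fixed vector space $V$ for all $\ochi \in Y$), the assignment $\ochi \mapsto J(w,\ochi)$ becomes a regular family $Y \to \mathrm{Aut}(V)$ for each $w \in W_Y$. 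Since $Y$ is connected and $H^2(W_Y,\C^\times)$ is discrete, the cohomology class $[\kappa_{\ochi}]$ is constant on $Y$; an explicit $1$-cochain $\lambda: W_Y \to \C^\times$ with $\kappa_{\ochi} = (d\lambda)\cdot\kappa_{\ochi'}$ is produced by comparing $J(w,\ochi)$ with $J(w,\ochi')$ along any path in $Y$. The scaling $J_w \mapsto \lambda(w) J_w$ then yields the isomorphism $\C[\mf R_{\ochi},\kappa_{\ochi}] \isom \C[\mf R_{\ochi'},\kappa_{\ochi'}]$ and simultaneously intertwines the projective representations of $W_Y$ on $V$, with all scalars determined by the fixed normalisation of the $J(w,\ochi)$.

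The principal obstacle is in part (a): one must invoke Silberger's rank-one theorem in its sharp form, not merely that $\mu_\alpha(\ochi) = 0$ implies $s_\alpha \in W_{\ochi}$ but also the converse, and handle with care any reflections of $W_Y$ that sit inside $\mf R_{\ochi}$ rather than inside $W(R_{\ochi})$. Once this identification of $R_{\ochi}$ with $W_{\ochi}$-data is secured, part (a) is immediate, and part (b) reduces to the fairly routine continuity-plus-cohomology argument above, powered by the rationality of the normalised intertwining operators.
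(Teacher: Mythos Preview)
Your treatment of part (b) matches the paper's: fix the underlying vector space via the compact picture, use Lemma~\ref{lem:2.3} to see that the $J(w,\ochi)$ and hence $\kappa_{\ochi}$ vary regularly on $Y$, then invoke connectedness of $Y$ together with discreteness of the target ($H^2(W_Y,\C^\times)$ for you, the set of projective $W_Y$-representations of fixed dimension for the paper) to pin down a coboundary realising the isomorphism.

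Part (a), however, has a genuine gap. The converse you need --- that $s_\alpha \in W_{\ochi}$ forces $\mu_\alpha(\ochi) = 0$ --- is simply false, and Silberger does not claim it. A reflection $s_\alpha$ can lie in $\mf R_{\ochi}$ rather than in $W(R_{\ochi})$; you flag this possibility as the ``principal obstacle'' but never exclude it, and it cannot be excluded. Take $G = \SL_2(F)$, $M = T$, and $\omega$ a ramified quadratic character of $F^\times$: then $s_\alpha \cdot \omega = \omega^{-1} = \omega$, so $s_\alpha \in W_\omega$, yet $I_B^G(\omega)$ is a direct sum of two irreducibles, so $\mf R_\omega \cong \Z/2\Z$ and hence $R_\omega = \emptyset$, i.e.\ $\mu_\alpha(\omega) \neq 0$. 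Thus $R_{\ochi}$ is \emph{not} determined by $W_{\ochi}$ alone, and your argument for (a) never uses the connectedness of $Y$, which is in fact the essential hypothesis.

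The paper's route for (a) is to observe that $\mu_\alpha$, being a rank-one object, depends on $\ochi \in \mc O_\C$ only through the $\alpha^\vee$-coordinate; on a connected component of the fixed locus $\mc O_\C^{s_\alpha}$ this coordinate is frozen, so $\mu_\alpha$ is constant there. Since $Y$ is connected and, for each $\alpha$ with $s_\alpha \in W_Y$, lies inside $\mc O_\C^{s_\alpha}$ (while for $s_\alpha \notin W_Y$ one has $\alpha \notin R_{\ochi}$ automatically), the vanishing or non-vanishing of each $\mu_\alpha$ is constant on $Y$, and $R_{\ochi}$ with it.
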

\begin{proof}
(a) Since $\mu_\alpha$ depends only on the values of the coroot $\alpha^\vee$ on
$\mc O_\C$, it is constant on the connected components of $\mc O_\C^{s_\alpha}$.
Hence $R_{\ochi} = R_{\ochi'}$ for all $\ochi, \ochi' \in Y$. This implies the
corresponding statement for the R-groups, by their very definition.\\
(b) The action of $W_{\ochi}$ via the $J(w,\ochi)$ defines a projective representation
on $I_P^G (\ochi)$. By \cite[\S IV.1]{Wal} the vector space underlying 
$I_P^G (\ochi)$ is independent of $\chi \in X_{\nr}(M) / X_{\nr}(M)_\omega$. 
By Lemma \ref{lem:2.3} the $J(w,\ochi)$ depend algebraically on $\chi$, so we have 
a continuous family of projective representations of the finite group $W_{\ochi}$.
Given the dimension, there are only finitely many equivalence classes of such 
representations, so all the $I_P^G (\ochi)$ with $\ochi \in Y$ are isomorphic as 
projective $W_{\ochi}$-representations. In particular the 2-cocycles $\kappa_{\ochi}$
of $W_{\ochi}$ for different $\ochi \in Y$ are in the same cohomology class. 
Moreover, since the $\kappa_{\ochi}$ are defined in terms of the $J(w,\ochi)$,
they vary continuously as functions on $Y$. Now \eqref{eq:2.6} shows that there
is a unique family algebra isomorphisms
\[
\C [W_{\ochi},\kappa_{\ochi}] \to \C [W_{\ochi'},\kappa_{\ochi'}] 
\quad \text{of the form} \quad J_w \mapsto a_w (\ochi,\ochi') J_w 
\]
with $a_w : Y^2 \to \C^\times$ continuous and $a_w (\ochi,\ochi) = 1$. In view of
part (a) these isomorphisms restrict to
\[
\C [\mf R_{\ochi},\kappa_{\ochi}] \to \C [\mf R_{\ochi'},\kappa_{\ochi'}] . \qedhere
\]
\end{proof}

The following result generalizes the theory of R-groups \cite[\S 2]{Art1} 
to non-tempered representations. It also provides an explanation for the failure of
some properties of R-groups observed in \cite{BaJa}, see Example \ref{ex:SO9}.

\begin{thm}\label{thm:2.5} 
Let $\omega \in \Irr (M)$ be square-integrable modulo centre and let $\chi \in X_{\nr}(M)$. 
\enuma{
\item There exists an injective algebra homomorphism
\[
\C [\mf R_{\ochi},\kappa_{\ochi}] \to \End_G (I_P^G (\ochi)) ,
\]
which is bijective if $\chi$ is positive with respect to $P$. It is canonical up to 
twisting by characters of $\mf R_{\ochi}$.
\item Part (a) determines bijections
\[
\begin{array}{ccccc}
\Irr \big( \C [\mf R_{\ochi},\kappa_{\ochi}] \big) & \to & \Irr_{M,\ochi}(M(\chi)) & \to
& \Irr_{M,\ochi}(G) \\
\rho & \mapsto & \pi (M,\ochi,\rho) & \mapsto & L(M,\ochi,\rho) ,
\end{array}
\]
where 
\[
\pi (M,\ochi,\rho) = \Hom_{\C [\mf R_{\ochi},\kappa_{\ochi}]}
\big( \rho, I_{P \cap M(\chi)}^{M(\chi)} (\ochi) \big)
\] 
and $L(M,\ochi,\rho)$ is the unique Langlands constituent of 
\[
I_{P M(\chi)}^G (\pi (M,\ochi,\rho)) = \Hom_{\C [\mf R_{\ochi},\kappa_{\ochi}]}
\big( \rho, I_P^G (\ochi) \big) .
\]
\item $I_P^G (\ochi) \cong \bigoplus_\rho I_{P M(\chi)}^G (\pi (M,\ochi,\rho)) \otimes \rho$
as $G \times \C [\mf R_{\ochi},\kappa_{\ochi}]$-representations.
} 
\end{thm}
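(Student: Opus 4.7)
The strategy is to reduce the entire statement to the classical tempered R-group theorem (Harish-Chandra, Knapp--Stein, Silberger, as formulated in \cite[\S 2]{Art1}) applied inside the Levi subgroup $M(\chi)$, and then transport the result to $G$ across the functor $I_{PM(\chi)}^G$ by way of the Langlands classification.

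The first step is to identify the analytic R-group data $(R_{\ochi},\mf R_{\ochi},\kappa_{\ochi})$ with the tempered R-group data of $\ochi$ inside $M(\chi)$. By the maximality built into the definition of $M(\chi)$, every reduced root on which Harish-Chandra's $\mu$-function vanishes lies in $R(M(\chi),A)$, so $W_{\ochi}\subset N_{M(\chi)}(M)/M$ and the whole R-group apparatus is intrinsic to $M(\chi)$. Because $\ochi$ has unitary restriction to $M\cap M(\chi)_{der}$, the $M(\chi)$-representation $\tau:=I_{P\cap M(\chi)}^{M(\chi)}(\ochi)$ is essentially tempered and completely reducible, and the tempered R-group theorem supplies an algebra isomorphism
\[
\Psi:\C[\mf R_{\ochi},\kappa_{\ochi}]\isom \End_{M(\chi)}(\tau),\qquad J_w\mapsto J_{M(\chi)}(w,\ochi).
\]

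For part (a) I would apply induction in stages, $I_P^G(\ochi)=I_{PM(\chi)}^G(\tau)$. When $\chi$ is positive with respect to $P$, each irreducible summand of $\tau$ together with $PM(\chi)$ is a Langlands datum, so $I_{PM(\chi)}^G$ is fully faithful on this class of representations and induces an isomorphism $\End_G(I_{PM(\chi)}^G(\tau))\isom \End_{M(\chi)}(\tau)$; composition with $\Psi$ yields the bijective claim. For arbitrary $\chi$, the identity $J(w,\ochi)=z_w\cdot I_{PM(\chi)}^G(J_{M(\chi)}(w,\ochi))$ with $z_w\in \C^\times$ established in the proof of Lemma~\ref{lem:2.3} shows that $J_w\mapsto z_w\cdot I_{PM(\chi)}^G(\Psi(J_w))$ is still an algebra homomorphism into $\End_G(I_P^G(\ochi))$, and it remains injective because $I_{PM(\chi)}^G$ is faithful on the semisimple $\tau$. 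The indeterminacy of the $z_w$ accounts for the canonical-up-to-a-character clause.

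Parts (b) and (c) follow from Wedderburn. Writing $\C[\mf R_{\ochi},\kappa_{\ochi}]\cong \bigoplus_\rho \End(V_\rho)$, the isomorphism $\Psi$ together with the double-centralizer theorem yields the $M(\chi)\times \C[\mf R_{\ochi},\kappa_{\ochi}]$-decomposition
\[
\tau\cong \bigoplus_{\rho\in \Irr(\C[\mf R_{\ochi},\kappa_{\ochi}])}\pi(M,\ochi,\rho)\otimes \rho,
\]
whose components $\pi(M,\ochi,\rho)=\Hom_{\C[\mf R_{\ochi},\kappa_{\ochi}]}(\rho,\tau)$ are pairwise nonisomorphic irreducibles exhausting $\Irr_{M,\ochi}(M(\chi))$. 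Applying $I_{PM(\chi)}^G$ and extracting the unique Langlands constituent $L(M,\ochi,\rho)$ of each indecomposable $I_{PM(\chi)}^G(\pi(M,\ochi,\rho))$ yields the bijections of (b); induction in stages together with exactness then immediately gives the $G\times \C[\mf R_{\ochi},\kappa_{\ochi}]$-decomposition of (c).

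The main obstacle, in my view, is part (a) when $\chi$ is not positive with respect to $P$: one must check that the chosen normalization of $J(w,\ochi)$ is holomorphic and nonzero at this $\ochi$, and that it is compatible with the identification $I_P^G(\ochi)\cong I_P^G(w\cdot \ochi)$ needed to view $J(w,\ochi)$ as an endomorphism. Both issues are handled by the Lemma~\ref{lem:2.3} identity, which forces the relevant operators to be $\C^\times$-multiples of operators induced from $M(\chi)$, where holomorphy and invertibility are automatic. Surjectivity cannot be expected in general in this case, since extra endomorphisms may arise from the Langlands filtration of the standard module $I_P^G(\ochi)$.
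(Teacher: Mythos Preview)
Your outline is correct and close to the paper's argument; the core idea in both is the reduction to $M(\chi)$, where $\ochi$ is tempered up to a central twist, followed by transport along $I_{PM(\chi)}^G$. There is one genuine (small) logical gap and one methodological difference worth noting.

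\textbf{The gap.} Your justification for $W_{\ochi}\subset N_{M(\chi)}(M)/M$ is not valid as written. Knowing that every root with $\mu_\alpha(\ochi)=0$ lies in $R(M(\chi),A)$ only gives $W(R_{\ochi})\subset W(M(\chi),A)$; it says nothing about the complementary factor $\mf R_{\ochi}$ in the decomposition $W_{\ochi}=\mf R_{\ochi}\ltimes W(R_{\ochi})$. The correct argument, which the paper gives, is that any $w\in W_{\ochi}$ fixes the absolute value $|\chi|$ (equivalently $\log|\chi|\in\mf a^*$), and the stabiliser of $\log|\chi|$ in the relative Weyl group $W(G,A)$ is exactly $W(M(\chi),A)=N_{M(\chi)}(M)/M$. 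Once this is in place, your identification of the R-group data in $G$ and in $M(\chi)$ goes through.

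\textbf{The difference.} For part~(a) with general (not necessarily $P$-positive) $\chi$, the paper does not argue via faithfulness of $I_{PM(\chi)}^G$ as you do. Instead it uses the deformation Lemma~\ref{lem:2.4}: one connects $\ochi$ to a genuinely unitary point $\omega'\in\mc O$ along a path in $\mc O_\C$ on which $W_{\ochi}$ is constant, and transports the tempered statement (linear independence of the $J(w,\omega')$ for $w\in\mf R_{\omega'}$) to $\ochi$ by continuity of the projective $W_{\ochi}$-representation. Your route via Lemma~\ref{lem:2.3} and faithfulness of $I_{PM(\chi)}^G$ on semisimple objects is more direct and avoids the deformation, at the price of having to keep track of the coboundary $z_w$ relating $\kappa_{\ochi}$ (defined via the $G$-intertwiners) to the cocycle defined via the $M(\chi)$-intertwiners; you handle this correctly. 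For the bijectivity when $\chi$ is $P$-positive, both proofs ultimately appeal to the Langlands classification (the paper through \cite[Proposition~2.14.c]{Sol1}, you by invoking full faithfulness of $I_{PM(\chi)}^G$ on standard modules), and parts~(b) and~(c) are handled identically.
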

\begin{proof}
For $\chi \in X_{\unr}(M)$ this is well-known, see \cite[\S 2]{Art1}. 
By \eqref{eq:2.5} it holds more generally for $\chi \in X_{\nr}(M)$ which are unitary on
$M \cap G_{der}$.\\
(a) Since $W(\mc O)$ acts on $\mc O_\C$ by algebraic automorphisms, we can find a set $Y$
as in Lemma \ref{lem:2.4} which contains both $\ochi$ and some $\omega' \in \mc O$.
By \cite{Sil1} the intertwining operator $J(w,\omega') \in \End_G (I_P^G (\omega'))$ 
is scalar if and only if $w \in W (R_{\omega'})$, and by the aforementioned result of 
\cite{Art1} the operators $J(w,\omega')$ with $w \in \mf R_{\omega'}$ span a subalgebra of
$\End_G (I_P^G (\omega'))$ isomorphic to $\C [\mf R_{\omega'},\kappa_{\omega'}]$. By Lemma
\ref{lem:2.4}.b the same holds for all elements of $Y$, and in particular for $\ochi$.
By Harish-Chandra's commuting algebra theorem \cite[Theorem 5.5.3.2]{Sil2}
\begin{equation}\label{eq:2.7}
\C [\mf R_{\ochi},\kappa_{\ochi}] \cong \End_G (I_P^G (\omega \otimes \chi)) 
\quad \text{for } \chi \in X_{\unr}(M) .
\end{equation}
Since both sides are invariant under twisting by unramified characters of $G$, \eqref{eq:2.7} 
holds whenever $\chi \in X_{\nr}(M)$ is unitary on $M \cap G_{der}$. 

Every element of $W(G)$ that stabilizes $(M,\ochi)$ already lies in $W(M(\chi))$. Therefore
it does not matter whether we compute $W_{\ochi}$ in $G$ or in $M(\chi)$. The definitions
of $R^+_{\ochi} ,\; W(R_{\ochi})$ and $\mf R_{\ochi}$ are also the same for $(G,P)$ as
for $(M(\chi),P \cap M(\chi))$. Now it follows from \cite[Proposition 2.14.c]{Sol1} and
\eqref{eq:2.7} that for $\chi$ positive with respect to $P$
\[
\End_G (I_P^G (\ochi)) \cong \End_{M(\chi)} \big( I_{P \cap M(\chi)}^{M(\chi)} (\ochi) \big)
\cong \C [\mf R_{\ochi},\kappa_{\ochi}] .
\]
The construction of the isomorphism \eqref{eq:2.7} is unique up to algebra automorphisms
of $\C [\mf R_{\ochi},\kappa_{\ochi}]$ which preserve each of the one-dimensional subspaces
$\C w$. Every such automorphism comes from twisting by a character of $\mf R_{\ochi}$. \\
(c) In view of the remarks at the start of the proof, this holds with respect to the group
$M(\chi)$ (instead of $G$). But $\C [\mf R_{\ochi},\kappa_{\ochi}]$ is the same for
$(G,P)$ and $(M(\chi),P \cap M(\chi))$, so we obtain the result for $G$ by applying the
functor $I_{P M(\chi)}^G$ to the result for $M(\chi)$.\\
(b) For the same reason as (c), this holds on the level of $M(\chi)$. Choose a parabolic 
subgroup $P'$ containing $M$, with respect to which $\chi$ is positive. Then $P' \cap M(\chi)
= P \cap M(\chi)$, so 
\[
\pi (M,\ochi,\rho) = \Hom_{\C [\mf R_{\ochi},\kappa_{\ochi}]}
\big( \rho, I_{P' \cap M(\chi)}^{M(\chi)} (\ochi) \big) .
\]
By Lemma \ref{lem:2.1} $I_P (M,\ochi,\rho)$ and $I_{P'}^G (M,\ochi,\rho)$ have the same
irreducible constituents and by the Langlands classification there is a unique 
Langlands quotient among them. This provides the bijection 
$\Irr_{M,\ochi}(M(\chi)) \to \Irr_{M,\ochi}(G)$. 
\end{proof}

We remark that, since parabolic induction preserves irreducibility of representations 
in most cases, $L (M,\ochi,\rho) = I_{P M(\chi)}^G (\pi (M,\ochi,\rho))$ for $\chi$ 
in a Zariski-open subset of $\mc O_\C$. For $\ochi \in \mc O$ the stronger 
$L (M,\ochi,\rho) = \pi (M,\ochi,\rho)$ holds, because then $M(\chi) = G$.

Theorem \ref{thm:2.5} gives rise to a conjectural parametrization of L-packets.
Suppose that $\phi$ is a Langlands parameter for $G$, which is elliptic for a Levi
subgroup $M \subset G$. By \cite[\S 10.3]{Bor} the L-packet $\Pi_\phi (M)$ should
consist of essentially square-integrable representations. If $N_G (M,\phi)$ denotes the
stabilizer of this L-packet in $G$, Theorem \ref{thm:2.2} shows that $N_G (M,\phi)$-associate 
elements of $\Pi_\phi (M)$ yield the same parabolically induced representations. The
conjectural compatibility of the local Langlands correspondence with parabolic induction
and with the formation of Langlands quotients make it reasonable to expect that
\begin{multline}\label{eq:2.14}
\Pi_\phi (G) = \bigsqcup_{\ochi \in \Pi_\phi (M) / N_G (M,\phi)} \Irr_{M,\ochi}(G) \\
= \bigsqcup_{\ochi \in \Pi_\phi (M) / N_G (M,\phi)} \big\{ L(M,\ochi,\rho) : 
\rho \in \Irr \big( \C [\mf R_{\ochi},\kappa_{\ochi}] \big) \big\} .
\end{multline}

\section{Algebraic families of irreducible representations}
\label{sec:famrep}

Let $X$ be a real or complex algebraic variety. By an algebraic family of 
$G$-re\-pre\-sen\-ta\-tions we mean a family $\{ \pi_x : x \in X\}$ such that all the 
$\pi_x$ are realized on the same vector space (up to some natural isomorphism) and the 
matrix coefficients depend algebraically on $x$. 

Lemma \ref{lem:2.4} and Theorem \ref{thm:2.5} can be used to give a rough description
of the geometric structure of the Bernstein component of $\Irr (G)$ determined by $\mc O$,
in terms of algebraic families. For any subset $Y \subset \mc O_\C$ we define 
\[
\Irr_{M,Y}(G) := \bigcup\nolimits_{\ochi \in Y} \Irr_{M,\ochi}(G) .
\]

\begin{prop}\label{prop:2.6}
Let $Y$ be a maximal connected subset of $\mc O_\C$ on which $W_{\ochi}$ is constant.
\enuma{
\item $Y$ is of the form $X \setminus X^*$, where $X$ is a coset of a complex 
subtorus of $\mc O_\C$ and $X^*$ is a finite union of cosets of complex subtori 
of smaller dimension than $X$. 
\item Let $W(\mc O)_X$ be the (setwise) stabilizer of $X$ in $W(\mc O)$. Theorem 
\ref{thm:2.5} determines a natural bijection 
\[
\big( X \setminus X^* \times \Irr_{M,\ochi}(G) \big) / W(\mc O)_X \to 
\Irr_{M,X \setminus X^*}(G) ,
\]
for any $\ochi \in X \setminus X^*$.
\item Representations in $\Irr_{M,X \setminus X^*}(G)$ are tempered if and only if
the parameter $\ochi$ is in $X_{\cpt} \setminus X^*_{\cpt}$, the canonical real form 
of $X \setminus X^*$.
}
\end{prop}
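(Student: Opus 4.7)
The three parts all build on a single structural fact: the orbit space $\mc O_\C$ is an algebraic torus (after the choice of basepoint \eqref{eq:2.2}) on which $W(\mc O)$ acts by algebraic automorphisms, and $W_{\ochi}$ is the stabilizer. Part (a) will be a fact about fixed-point sets of automorphisms of a torus, part (b) will package Theorem~\ref{thm:2.5} and Lemma~\ref{lem:2.4} across the family $Y$, and part (c) is a direct consequence of Theorem~\ref{thm:2.2}(c).

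\textbf{Part (a).} Fix $\ochi_0 \in Y$ and let $H = W_{\ochi_0}$. For each $w \in W(\mc O)$ the fixed-point set $\mc O_\C^w = \{\ochi : w\cdot\ochi \cong \ochi\}$ is a closed algebraic subvariety of the complex torus $\mc O_\C$; because $w$ acts as a group automorphism composed with a translation, the connected components of $\mc O_\C^w$ are cosets of a complex subtorus. The locus $\bigcap_{w\in H} \mc O_\C^w$ then has connected components of the same form, and I would take $X$ to be the component containing $\ochi_0$. Setting $X^\ast := X \cap \bigcup_{w\in W(\mc O)\setminus H} \mc O_\C^w$ gives a finite union of cosets of subtori of $\mc O_\C$; each is a proper closed subset of $X$ because on $Y \subset X$ no $w \notin H$ fixes the points. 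Since $X$ is irreducible and $X^\ast$ is a proper Zariski closed subset, $X \setminus X^\ast$ is connected, and by construction $W_{\ochi}$ is constant equal to $H$ on it. Maximality of $Y$ then forces $Y = X \setminus X^\ast$.

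\textbf{Part (b).} Here I would use Lemma~\ref{lem:2.4} to fix once and for all a ``trivialization'' of the family of twisted group algebras $\C[\mf R_{\ochi},\kappa_{\ochi}]$ along $X \setminus X^\ast$: for each $\ochi' \in X \setminus X^\ast$ we have a preferred algebra isomorphism $\C[\mf R_{\ochi},\kappa_{\ochi}] \to \C[\mf R_{\ochi'},\kappa_{\ochi'}]$ sending $J_w \mapsto a_w(\ochi,\ochi')J_w$. Combined with Theorem~\ref{thm:2.5}(b), this yields a surjection
\[
(X \setminus X^\ast) \times \Irr_{M,\ochi}(G) \twoheadrightarrow \Irr_{M,X \setminus X^\ast}(G),
\qquad (\ochi',\pi) \mapsto L(M,\ochi',\rho_\pi),
\]
where $\rho_\pi$ is the $\C[\mf R_{\ochi'},\kappa_{\ochi'}]$-representation obtained by transport along the trivialization. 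By the uniqueness clause of Theorem~\ref{thm:2.2}(b), two pairs $(\ochi',\rho')$ and $(\ochi'',\rho'')$ give the same Langlands constituent exactly when they are conjugate under an element of $W(M)$ preserving $\mc O$, i.e.\ under $W(\mc O)$; by part (a) such an element must stabilize $X$, so the equivalence is exactly the $W(\mc O)_X$-action. This establishes the bijection. The main care is in checking that the $W(\mc O)_X$-action on the right-hand factor obtained after transport is well-defined; Lemma~\ref{lem:2.1} (which makes $I_P^G$ insensitive to Weyl translates) is what makes this consistent.

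\textbf{Part (c).} By Theorem~\ref{thm:2.2}(c), $L(M,\ochi,\rho)$ is tempered iff $\chi$ is unitary, i.e.\ $\chi \in X_{\unr}(M)$. The image of $X_{\unr}(M)$ under \eqref{eq:2.2} is the maximal compact real subtorus of $\mc O_\C$; intersecting with $X$ gives a real compact form $X_\cpt$ of $X$, and the tempered locus in $\Irr_{M,X\setminus X^\ast}(G)$ corresponds exactly to $X_\cpt \setminus X^\ast_\cpt$.

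\textbf{Main obstacle.} The subtle point is part (b): making the bijection truly natural. Theorem~\ref{thm:2.5}(a) is only canonical up to twisting by characters of $\mf R_{\ochi}$, and Lemma~\ref{lem:2.4}(b) only identifies the algebras up to the continuous factors $a_w$. I would need to verify that these ambiguities are absorbed once one fixes the $\ochi$ that indexes $\Irr_{M,\ochi}(G)$ and identifies $L(M,\ochi',\rho') = L(M,\ochi'',\rho'')$ only up to the $W(\mc O)_X$-action; this is really the content of ``natural'' in the statement.
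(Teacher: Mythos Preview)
Your arguments for parts (a) and (c) are correct and essentially identical to the paper's: the paper also decomposes the $W(\mc O)$-action on $\mc O_\C$ into a linear part $\lambda_w$ and a translation $t_w$, so that each $\mc O_\C^w$ is a finite union of cosets of the subtorus $(\mc O_\C^{\lambda_w})^\circ$, and then takes $X$ to be the connected component of $\mc O_\C^{W_Y}$ through the basepoint and $X^*$ the locus with strictly larger stabilizer. Part (c) is, in both proofs, just a citation of Theorem~\ref{thm:2.2}(c).

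For part (b) you have correctly identified the issue --- the character-twist ambiguity in Theorem~\ref{thm:2.5}(a) and the scalar factors $a_w$ in Lemma~\ref{lem:2.4}(b) --- but you stop short of resolving it. The paper handles this differently from what you sketch. Rather than tracking the isomorphisms $\C[\mf R_{\ochi},\kappa_{\ochi}] \to \C[\mf R_{\ochi'},\kappa_{\ochi'}]$ and arguing that the ambiguities cancel, the paper fixes a basepoint $\ochi \in \mc O \cap (X \setminus X^*)$ and works directly with the \emph{intrinsic} algebra $A = \End_G(I_P^G(\ochi))$. Since all the $I_P^G(\ochi')$ are realized on the same underlying vector space, Theorem~\ref{thm:2.5}(a) gives a canonical inclusion $A \subset \End_G(I_P^G(\ochi'))$ for every $\ochi' \in X \setminus X^*$, and this $A$ already determines the decomposition of $I_P^G(\ochi')$ into indecomposables. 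Substituting $A$ for $\C[\mf R_{\ochi},\kappa_{\ochi}]$ in Theorem~\ref{thm:2.5}(b) then gives the same bijection, but now visibly independent of any character twist --- that is what ``natural'' means here. Your approach via Lemma~\ref{lem:2.4}(b) could in principle be completed, but the paper's route sidesteps the bookkeeping entirely by never leaving the category of honest $G$-intertwiners.
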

\begin{proof}
Consider $\mc O_\C$ as an algebraic group via the bijection \eqref{eq:2.2}. The invertible
elements in the coordinate ring $\C [\mc O_\C] \cong \C [X^* (\mc O_\C)]$ are
\[
\C [\mc O_\C ]^\times = \{ z x : x \in X^* (\mc O_\C), z \in \C^\times \} .
\]
Hence the action of $W(\mc O)$ on $\mc O_\C$ induces a group action on
$\C [\mc O_\C ]^\times / \C^\times \cong X^* (\mc O_\C)$, say $(w,x) \mapsto \lambda_w (x)$.
Then
\[
w \cdot zx = z t_w (x) \lambda_w (x) \quad \text{for a unique} \quad t_w (x) \in \C^\times .
\]
Clearly $t_w$ determines a group homomorphism $X^* (\mc O_\C) \to \C^\times$, so it can be
regarded as an element of $\mc O_\C$. Thus we decomposed the action of $w \in W(\mc O)$ on
$\mc O_\C$ as $t_w \lambda_w$, where $\lambda_w$ is an automorphism of $\mc O_\C$ as an
algebraic group and $t_w$ is translation by an element of $\mc O_\C$. The fixed points of
such a transformation are of the form 
\[
\mc O_\C^w = \big( \mc O_\C^{\lambda_w} \big)^\circ F_w
\text{ for some finite subset } F_w \subset \mc O_\C^w .
\]
Furthermore $\big( \mc O_\C^{\lambda_w} \big)^\circ$ is an algebraic torus, since it is the
image of the $\lambda_w$-invariants in the Lie algebra of $\mc O_\C$ under the exponential
map. More generally, for any subgroup $W \subset W(\mc O)$,
\[
\mc O_\C^W = \big( \mc O_\C^{\lambda (W)} \big)^\circ F_W .
\]
The subset $\big( \mc O_\C^W \big)^* \subset \mc O_\C^W$ of points with a stabilizer 
strictly larger than $W$ arises from sets of the same shape, so it is union of cosets 
of algebraic tori of $\big( \mc O_C^{\lambda (W)} \big)^\circ$. For $W = W_Y$ we get
\[
X = \big( \mc O_\C^{\lambda (W_Y)} \big)^\circ (\ochi) \quad \text{and} \quad
X^* = \big( \mc O_\C^{\lambda (W_Y)} \big)^* \cap X ,
\]
which are of the required form.\\
(b) The bijection is constructed with Theorems \ref{thm:2.2}, \ref{thm:2.5} and 
Lemma \ref{lem:2.4}. To see that it is natural, consider a $\ochi \in \mc O \cap X \setminus X^*$
and abbreviate $A = \mathrm{End}_G (I_P^G (\ochi))$. Since all the representations $I_P^G 
(\ochi')$ are realized on the same vector space, Theorem \ref{thm:2.5}.a shows that 
$A \subset \mathrm{End}_G (I_P^G (\ochi'))$ for all $\ochi' \in X \setminus X^*$, and that
$A$ determines the decomposition of $I_P^G (\ochi')$ into indecomposable representations.
If we substitute $A$ for $\C [\mf R_{\ochi},\kappa_{\ochi}]$ in Theorem \ref{thm:2.5}.b we
obtain the same bijection as in part (b) of the current proposition. This makes it clear that
twisting by characters in Theorem \ref{thm:2.5}.a does not effect the bijection, so it is
natural.\\
(c) is merely a restatement of Theorem \ref{thm:2.2}.c.
\end{proof}

For $\rho \in \Irr \big( \C [\mf R_{\ochi},\kappa_{\ochi}] \big)$ we put
\begin{equation}\label{eq:2.8}
\Irr_{M,X \setminus X^*,\rho}(G) = \{ L(M,\ochi,\rho) : \ochi \in X \setminus X^* \} .
\end{equation}
Let $W (\mc O)_{X,\rho}$ be the stabilizer of this set in $W(\mc O)$.
By Proposition \ref{prop:2.6}.b there is a bijection 
\[
(X \setminus X^* ) / W(\mc O)_{X,\rho} \to \Irr_{M,X \setminus X^*,\rho}(G) .
\]
Notice that the left hand side is a complex quasi-affine variety. 
By Proposition \ref{prop:2.6}.c
\begin{multline}\label{eq:2.9}
\Irr^t (G) \cap \Irr_{M,X \setminus X^*,\rho}(G) = \\
\Irr_{M,X_{\cpt} \setminus X_{\cpt}^*,\rho}(G) 
:= \{ \pi (M,\ochi,\rho) : \ochi \in X_{\cpt} \setminus X_{\cpt}^* \} ,
\end{multline}
which is in bijection with the real form $(X_{\cpt} \setminus X^*_{\cpt}) / W(\mc O)_{X,\rho}$ 
of $(X \setminus X^* ) / W(\mc O)_{X,\rho}$. By Theorem \ref{thm:2.2}.b two such families 
$\Irr_{M_1,X_1 \setminus X_1^*,\rho_1}(G)$ and $\Irr_{M_2,X_2 \setminus X_2^*,\rho_2}(G)$ are
either disjoint or equal. The latter happens if and only if there is a $g \in G$ such that
$g M_2 g^{-1} = M_1$ and $(g \cdot X_2 \setminus X_2^*,g \cdot \rho_2)$ is $W(M_1)$-equivalent
with $(X_1 \setminus X_1^*,\rho_1)$. 
In this way $\Irr (G)$ can be regarded as the complexification of $\Irr^t (G)$.

For $\Irr_{M,X \setminus X^*,\rho}(G)$ as in \eqref{eq:2.8}, let $X^\sharp$ be the 
union of $X^*$ and the $\omega \in X \setminus X^*$ for which the Langlands quotient
$L(M,\omega \otimes \chi,\rho)$ is not the whole of $I_{P M(\chi)}^G (\pi (M,\ochi,\rho))$. 
Then $\Irr_{M,X \setminus X^\sharp,\rho}(G)$ is an algebraic family of irreducible
$G$-representations.

\section{Algebraic families of Langlands parameters}
\label{sec:Lpar}

Let $\check G = \check{\mathcal G} (\C)$ be the complex dual group of $G = \mathcal G (F)$.
Let $E/F$ be a finite Galois extension over which $\mathcal G$ splits. The choice of a
pinning (also known as a splitting) for $G$ determines an action of the Galois group
$\Gal (E/F)$ on $\check G$. As Langlands dual group we take
\[
^{L}G = \check G \rtimes \Gal (E/F) . 
\]
Recall that the Weil group of $F$ can be written as 
$\mathbf W_F = \mathbf I_F \rtimes \langle \Fr \rangle$, where $\mathbf I_F$ is the 
inertia subgroup and $\Fr$ is a Frobenius element of $\mathbf W_F$.
A Langlands parameter for $G$ is a continuous group homomorphism 
\[
\phi \colon \mathbf W_F \times \SL_2 (\C) \to {}^{L}G 
\]
such that:
\begin{itemize}
\item $\phi (x) = \phi^\circ (x) \rtimes \pro (x)$, with $\phi^\circ : \mathbf W_F \times 
\SL_2 (\C) \to \check G$ and \\
$\pro:  \mathbf W_F \times \SL_2 (\C) \to \mathbf W_F \to \Gal (E/F)$ the natural projection;
\item $\phi (w)$ is semisimple for $w \in \mathbf W_F$;
\item $\phi \big|_{\SL_2 (\C)} : \SL_2 (\C) \to \check G$ is a homomorphism of algebraic groups.
\end{itemize}
We say that $\phi$ is relevant for $G$ if, whenever the image of $\phi$ is contained in a
parabolic subgroup $^{L}P$ \cite[\S 3]{Bor}, $^{L} P^\circ$ corresponds to a parabolic subgroup
of $\mathcal G$ which is defined over $F$. (This condition is empty if $G$ is quasi-split.)
We define $\Lpar (G)$ to be the set of relevant Langlands parameters for $G$ and 
$\Phi (G)$ to be $\Lpar (G) / \check G$ with respect to the conjugation action.

We say that $\phi \in \Lpar (G)$ is bounded if $\phi (\mathbf W_F)$ is bounded. Since 
$\mathbf I_F$ is compact and $\phi$ is continuous, $\phi$ is bounded if and only if 
$\phi^\circ (\Fr)$ lies in a compact subgroup of $\check G$. We denote the subsets of bounded
elements in $\Lpar (G)$ and $\Phi (G)$ by $\Lpar_{\bdd}(G)$ and $\Phi_{\bdd}(G)$.

\begin{lem}\label{lem:1.1}
Every $\phi \in \Lpar (G)$ can be written as $\phi = \phi_{\nr} \phi_f$ with
$\phi_f \in \Lpar (G) ,\; \phi_f (\mathbf W_F)$ finite and 
\[
\phi_{\nr} : \mathbf W_F \times \SL_2 (\C) / \mathbf I_F \times \SL_2 (\C) \to 
Z_{\check G}(\mathrm{im} \, \phi_f )^\circ .
\]
\end{lem}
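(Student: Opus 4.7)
The plan is to obtain the decomposition via a torus-versus-finite-order splitting of the single element $g := \phi(\Fr)$, since $\phi|_{\mathbf I_F \times \SL_2(\C)}$ is already forced. As a preliminary, I check that $\phi(\mathbf I_F)$ is finite: the profinite group $\mathbf I_F$ is compact, and since ${}^L G$ has no small subgroups, any sufficiently small identity neighborhood of ${}^L G$ pulls back through $\phi$ to an open neighborhood of $1 \in \mathbf I_F$ that contains an open normal subgroup $N$; then $\phi(N) = \{1\}$ and $\phi(\mathbf I_F)$ is a quotient of the finite group $\mathbf I_F/N$.

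Next, let $A \subseteq {}^L G$ be the Zariski closure of $\langle g \rangle$. Semisimplicity of $g$ makes $A$ a commutative algebraic group with $A^\circ$ a complex torus and $A/A^\circ$ cyclic of some order $m$, generated by the class of $g$. Divisibility of the torus $A^\circ$ lets me choose $t \in A^\circ$ with $t^m = g^m$ and set $g_{\nr} := t$, $g_f := t^{-1} g$, so that $g = g_{\nr} g_f$ with commuting factors, $g_f^m = 1$, and $g_{\nr} \in A^\circ \subseteq \check G$. The key observation driving the construction is that $A^\circ$ centralizes $\phi(\mathbf I_F) \cup \phi(\SL_2(\C))$: conjugation by $g$ normalizes $\phi(\mathbf I_F)$ (because $\Fr$ normalizes $\mathbf I_F$) and centralizes $\phi(\SL_2(\C))$, so the Zariski-closed normalizer and centralizer contain $A$; but the induced algebraic action of $A$ on the finite group $\phi(\mathbf I_F)$ factors through $A/A^\circ$, forcing the connected component $A^\circ$ to act trivially.

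I then define $\phi_f$ to equal $\phi$ on $\mathbf I_F \times \SL_2(\C)$ with $\phi_f(\Fr) := g_f$, and $\phi_{\nr}$ to be trivial on $\mathbf I_F \times \SL_2(\C)$ with $\phi_{\nr}(\Fr) := g_{\nr}$. Since $g_f$ induces the same conjugation on $\phi(\mathbf I_F)$ as $g$ does (their ratio $g_{\nr}$ being central by the previous paragraph), $\phi_f$ extends to a well-defined homomorphism on $\mathbf W_F \times \SL_2(\C) = (\mathbf I_F \rtimes \langle \Fr \rangle) \times \SL_2(\C)$ with finite image on $\mathbf W_F$, whose continuity, semisimplicity on $\mathbf W_F$, and algebraicity on $\SL_2(\C)$ are inherited from $\phi$. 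The map $\phi_{\nr}$ is well-defined because $(\mathbf W_F \times \SL_2(\C))/(\mathbf I_F \times \SL_2(\C))$ is free cyclic on the image of $\Fr$. The identity $\phi = \phi_{\nr} \phi_f$ follows from $g = g_{\nr} g_f$ together with the commutation of $g_{\nr}$ with $\mathrm{im}\, \phi_f = \phi(\mathbf I_F) \cdot \phi(\SL_2(\C)) \cdot \langle g_f \rangle$, and the same commutation shows $\mathrm{im}\,\phi_{\nr} \subseteq A^\circ \subseteq Z_{\check G}(\mathrm{im}\,\phi_f)^\circ$.

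The remaining step is to verify that $\phi_f$ is relevant for $G$, which I expect to be the most delicate point: one must check that any parabolic ${}^L P$ of ${}^L G$ containing $\mathrm{im}\,\phi_f$ also contains $\mathrm{im}\,\phi$, so that the $F$-rationality of ${}^L P^\circ$ transfers from the relevance of $\phi$. This should reduce to the fact that $\phi_f$ and $\phi$ differ only by an unramified twist valued in the connected centralizer of $\mathrm{im}\,\phi_f$, so that $\phi$ and $\phi_f$ pick out the same minimal Levi subgroup of ${}^L G$; this is the one place where I would lean on Borel's structural description of parabolics in ${}^L G$ rather than argue from scratch.
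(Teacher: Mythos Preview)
Your argument is essentially correct and spells out in full what the paper obtains by citation: the authors simply invoke \cite[Lemma~5.1]{Hei} for admissible homomorphisms $\mathbf W_F \to {}^L G$, remark that the same proof works with the $\SL_2(\C)$-factor present, and observe that Heiermann's argument actually lands $\phi_{\nr}(\Fr)$ in the identity component of the centralizer.  Your reconstruction---Zariski closure of $\langle \phi(\Fr)\rangle$, its identity component a torus centralising $\phi(\mathbf I_F)\cup \phi(\SL_2(\C))$, then the torus-versus-finite-order splitting of $\phi(\Fr)$---is the standard line and is presumably what Heiermann does; there is no genuinely different idea here, just an explicit version of the cited proof.

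One caution on the relevance step.  Your first formulation, that \emph{every} parabolic ${}^L P \supseteq \mathrm{im}\,\phi_f$ already contains $\mathrm{im}\,\phi$, is too strong and fails in easy split examples (take $\phi$ trivial on $\mathbf I_F\times \SL_2(\C)$ with $\phi(\Fr)$ a regular diagonal element of $\GL_n(\C)$: then $\phi_f$ is trivial and its image sits in every parabolic, while $\mathrm{im}\,\phi$ certainly does not).  What is true, and what you correctly identify in the next sentence, is that $\phi$ and $\phi_f$ determine the same minimal Levi of ${}^L G$ up to conjugacy.  Since $A^\circ$ sits in both $Z_{\check G}(\mathrm{im}\,\phi)^\circ$ and $Z_{\check G}(\mathrm{im}\,\phi_f)^\circ$, a maximal torus of either centralizer containing $A^\circ$ commutes with $g$, $g_f$ and $g_{\nr}$ simultaneously, so each centralizer contains a maximal torus of the other; the minimal Levis (centralizers of such maximal tori, as in Borel \S3) therefore coincide, and relevance of $\phi$ transfers to $\phi_f$.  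So your instinct is right---just drop the overly strong ``every parabolic'' claim and go straight to the minimal-Levi statement.
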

\begin{proof}
Heiermann \cite[Lemma 5.1]{Hei} proved the corresponding result for "admissible
homomorphisms" $\mathbf W_F \to {}^{L}G$. His proof remains valid for our Langlands
parameters. Although \cite{Hei} says only that $\phi (\Fr) \in Z_{\check G}(\mathrm{im} \, 
\phi_f )$, the proof shows that $\phi (\Fr)$ lies in the identity component of the latter group.
\end{proof}

We remark that in general $\phi_f$ is not uniquely determined by $\phi$, there can be finitely
many choices for $\phi_f (\Fr)$. 

Suppose now that $\phi_f \in \Lpar (G)$, with $\phi_f (\mathbf W_F)$ finite, is given. For
$s \in Z_{\check G}(\text{im } \phi_f)^\circ$ the element $s \phi_f (\Fr)$ is semisimple if and
only if $s$ is semisimple. In this case there is a Langlands parameter
\[
\phi_{f,s} := \phi_{\nr,s} \phi_f \quad \text{with} \quad \phi_{\nr,s}(\Fr) = s .
\]
Every parabolic subgroup that contains im $\phi_{f,s}$ also contains im $\phi_f$,
so the relevance of $\phi_f$ implies that $\phi_{f,s}$ is relevant for $G$. We put
\[
\Lpar (G,\phi_f) = \{ \phi' \in \Lpar (G) : \phi'_f \sim \phi_f \} , 
\]
where $\sim$ means that $\phi_f$ is a possible choice for $\phi'_f$. Let $\Phi (G,\phi_f)$
be the image of $\Lpar (G,\phi_f)$ in $\Phi (G)$.

Since im$(\phi_f )^\circ = \phi (\SL_2 (\C))$ is reductive, so is 
$Z_{\check G}(\text{im } \phi_f)^\circ$. Lemma \ref{lem:1.1} and 
the above show that $\Lpar (G,\phi_f)$ is naturally parametrized by the set of semisimple elements
$Z_{\check G}(\text{im } \phi_f)^\circ_{\semis}$. Clearly $\Lpar (G)$ is the union (usually not
disjoint) of the subsets $\Lpar (G,\phi_f)$. 
Since $Z_{\check G}(\text{im } \phi_f)^\circ_{\semis}$
is the union of the tori $T$ in $Z_{\check G}(\text{im } \phi_f)^\circ$, we can write
\begin{equation}\label{eq:1.1}
\begin{aligned}
& \Lpar (G) = \bigcup_{\phi_f ,T} \Lpar (G,\phi_f ,T) := 
\bigcup_{\phi_f ,T} \{ \phi_{f,s} : s \in T \} , \\
& \Phi (G) = \bigcup_{\phi_f ,T} \Phi (G,\phi_f ,T) := 
\bigcup_{\phi_f ,T} \big(\text{image of } \Lpar (G,\phi_f ,T) \text{ in } \Phi (G) \big) .
\end{aligned}
\end{equation}
Because all maximal tori of the complex reductive group 
$Z_{\check G}(\mathrm{im} \, \phi_f)^\circ$ are conjugate, we need only one maximal torus $T$ 
for each choice of $\phi_f$ to obtain the whole of $\Phi (G)$. Conjugation by any
element of $\check G$ sends any family $\Lpar (G,\phi_f ,T)$ to another such family, via an 
isomorphism of tori. Consequently
\begin{equation}\label{eq:1.2}
\Phi (G,\phi_{1f},T_1) \cap \Phi (G,\phi_{2f},T_2) \text{ is empty or }
\Phi (G,\phi_{1f},T_1 \cap T'_2) 
\end{equation}
for some torus $T'_2 \subset Z_{\check G}(\mathrm{im} \, \phi_f)^\circ$. We remark that here
and below we allow tori of dimension zero, which are just points.

Let $T_{\cpt}$ denote the maximal compact subgroup of a complex torus $T$, 
so in particular $T$ is the complexification of $T_{\cpt}$. Then
\begin{equation}\label{eq:1.3}
\begin{aligned}
& \Lpar_{\bdd} (G) = \bigcup_{\phi_f ,T} \Lpar (G,\phi_f ,T_{\cpt}) := 
\bigcup_{\phi_f ,T} \{ \phi_{f,s} \in \Lpar (G) : s \in T_{\cpt} \} , \\
& \Phi_{\bdd} (G) = \bigcup_{\phi_f ,T} \Phi_{\bdd} (G,\phi_f ,T) := 
\bigcup_{\phi_f ,T} \big(\text{image of } \Lpar_{\bdd} (G,\phi_f ,T) \text{ in } \Phi (G) \big) .
\end{aligned}
\end{equation}
For $T_2$ and $T'_2$ as in \eqref{eq:1.2}
\begin{equation}\label{eq:1.4}
\Phi \big( G,\phi_{1f},T_{1\cpt} \big) \cap \Phi \big( G,\phi_{2f},T_{2\cpt} \big) 
\text{ is empty or } \Phi \big( G,\phi_{1f},(T_1 \cap T'_2)_{\cpt} \big) .
\end{equation}
By \eqref{eq:1.2} and \eqref{eq:1.4} the intersections between such sets, which are partially
caused by the ambiguity of $\phi \mapsto \phi_f$, do not pose any problems for this way of 
decomposing the space of Langlands parameters. In the sense of \eqref{eq:1.1} and \eqref{eq:1.3}
$\Lpar (G)$ can be regarded as the complexification of $\Lpar_{\bdd}(G)$.  The action of 
$\check G$ preserves the structure introduced above, which enables us to see $\Phi (G)$ as the 
complexification of $\Phi_{\bdd}(G)$.

Now we include the S-groups from \cite{Art3} in the picture. These are improved versions 
of the usual component groups. Let $\check G_{sc}$ 
be the simply connected cover of the derived group of $\check G$. It acts on $\check G$ by
conjugation. For $\phi \in \Lpar (G)$ consider the groups
\[
\cent (\phi) := Z_{\check G_{sc}} (\mathrm{im} \, \phi) \quad \text{and} \quad
\mc{S}_\phi := \cent (\phi) / \cent (\phi)^\circ .
\]
(Arthur calls these groups $S_{\phi,sc}$ and $\widetilde{\mc S}_\phi$.)
Enhanced Langlands parameters for $G$ are pairs 
$(\phi,\rho)$ with $\phi \in \Lpar (G)$ and $\rho \in \Irr (\mc{S}_\phi)$. 
We call the set of such parameters $\Lpar^e (G)$. The conjugation action of $\check G$
on $\Lpar (G)$ extends naturally to an action on $\Lpar^e (G)$, namely
\[
\check g \cdot (\phi,\rho) = (\check g \phi \check{g}^{-1}, \rho \circ \mr{Ad}(\check g)^{-1}) . 
\]
We denote the set of equivalence classes by $\Phi^e (G)$.

Let $X$ be a real or complex algebraic variety. We say that a family 
$\{ (\phi_x ,\rho_x) : x \in X\}$ of enhanced Langlands parameters is an algebraic family 
if $\phi_x \big|_{\mathbf{I}_F \times \SL_2 (\C)}$ is independent of $x, \; \phi_x (\Fr)$ 
depends algebraically on $x$ and all the $\rho_x$ are (in some sense) equivalent.

Let $Z$ be the centralizer in $\check G$ of some element of $\Lpar (G,\phi_f,Y)$, where
$Y$ is a torus as in \eqref{eq:1.1}. Write 
$\mf t_Z = \text{Lie}(Y) \cap Z_{\text{Lie}(\check G)}(Z)$
and put $T_Z = \exp (\mf t_Z)$, a subtorus of $Y$. The elements $\phi \in \Lpar (G,\phi_f,Y)$ with 
$Z_{\check G} (\text{im } \phi) \supset Z$ correspond bijectively to a set of the form
\[
Y_Z = T_Z F_Z \subset Y ,
\]
where $F_Z$ is finite. We remark that $Y_Z$ need not contain the unit element.
The subset of $\phi \in \Lpar (G,\phi_f,Y)$ 
with $Z_{\check G} (\text{im } \phi) \supsetneq Z$ determines a finite union $Y^*_Z$ of cosets of 
algebraic subtori of smaller dimension in $T_Z$. Of course $Y_Z$ can be empty.
Let $T \subset Y_Z$ be a coset of an algebraic
subtorus of $T_Z$ and write $T^* = Y^*_Z \cap T$. For $\rho \in \Irr (\mc{S}_\phi)$ we have
an algebraic family
\begin{equation}\label{eq:1.6}
\Lpar (G,\phi_f,T \setminus T^*,\rho) = \{ (\phi_{f,s},\rho) : s \in T \setminus T^* \} . 
\end{equation}
Let $\Phi (G,\phi_f,T \setminus T^*,\rho)$ be its image in $\Phi^e (G)$. Conjugation by
an element of $\check G$ sends $\Lpar (G,\phi_f,T \setminus T^*,\rho)$ to a family of the
same form. It follows that 
\begin{equation}\label{eq:1.5}
\Phi (G,\phi_{1f},T_1 \setminus T_1^* ,\rho) \cap \Phi (G,\phi_{2f},T_2 \setminus T_2^*,\sigma) 
\text{ is empty or } \Phi (G,\phi_{1f},T_1 \cap T'_2,\rho)  
\end{equation}
for some subtorus $T'_2 \subset Y$. Similarly the set of enhanced bounded Langlands parameters 
$\Lpar^e_{\bdd}(G)$ is a union of the algebraic families
\begin{equation} \label{eq:1.7}
\Lpar (G,\phi_f,T_{\cpt} \setminus T^*_{\cpt},\rho) := 
\{ (\phi_{f,s},\rho) : s \in T_{\cpt} \setminus T^*_{\cpt} \} .
\end{equation}
Again we denote the image in $\Phi^e (G)$ by $\Phi (G,\phi_f,T_{\cpt} \setminus T^*_{\cpt},\rho)$.
By \eqref{eq:1.4} the intersections of such families satisfy
\[
\Phi (G,\phi_{1f},T_{1\cpt},\rho) \cap \Phi (G,\phi_{2f},T_{2\cpt},\sigma) \text{ is empty or }
\Phi (G,\phi_{1f},(T_1 \cap T'_2)_{\cpt},\rho) ,
\]
where $T'_2$ as in \eqref{eq:1.5}. We summarize the findings of this section in a proposition:

\begin{prop}\label{prop:1.2} \
\enuma{
\item $\Lpar^e (G)$ is in a natural way a union of algebraic families 
$\Lpar (G,\phi_f,T \setminus T^*,\rho)$, each of which is parametrized a complex variety 
$T \setminus T^*$. Every $T$ is a coset of a torus in $\check G$, and $T^*$ is a 
(possibly empty) finite union of cosets of tori of smaller dimension.
\item $\Lpar^e_{\bdd}(G)$ is in a natural way a union of algebraic families 
$\Lpar (G,\phi_f,T_{\cpt} \setminus T^*_{\cpt},\rho)$, each of which is parametrized by the 
canonical real form $T_{\cpt} \setminus T^*_{\cpt}$ of the variety $T \setminus T^*$.
\item Via (a) and (b) $\Lpar^e (G)$ can be regarded as the complexification of $\Lpar^e_{\bdd}(G)$.
\item The action of $\check G$ on $\Lpar^e (G)$ preserves these structures, and in that sense
$\Phi^e (G)$ can be seen as the complexification of $\Phi^e_{\bdd}(G)$. 
}
\end{prop}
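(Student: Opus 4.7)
My plan is to assemble the pieces already constructed earlier in the section; Proposition~\ref{prop:1.2} is essentially a summary of the preceding discussion, and no fundamentally new construction is needed.

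For part (a), I would start from Lemma~\ref{lem:1.1}, which parametrizes $\Lpar(G)$ by pairs $(\phi_f,s)$ with $s$ a semisimple element of $Z_{\check G}(\mathrm{im}\,\phi_f)^\circ$; the first rough decomposition \eqref{eq:1.1} then follows from the observation that the semisimple elements of a complex reductive group form the union of its maximal tori. To refine this into genuine algebraic families on which a fixed $\rho$ can be carried along, I would next stratify by the centralizer $Z = Z_{\check G}(\mathrm{im}\,\phi)$ exactly as in the paragraph preceding \eqref{eq:1.6}: the locus in a maximal torus $Y$ where this centralizer contains $Z$ is a set $Y_Z = T_Z F_Z$, while the locus where it is strictly larger is a finite union $Y^*_Z$ of cosets of proper subtori of $T_Z$. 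A single coset $T$ of a subtorus of $T_Z$, together with $T^* := T \cap Y^*_Z$, then has precisely the shape claimed. The algebraicity condition from the start of Section~\ref{sec:famrep} is immediate, since $\phi_{f,s}|_{\mathbf{I}_F \times \SL_2(\C)}$ is independent of $s$ and $\phi_{f,s}(\Fr) = s\,\phi_f(\Fr)$ depends algebraically on $s \in T$.

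For part (b), I would run the identical construction but restrict $s$ to the maximal compact subtorus $T_{\cpt}$ of $T$, invoking the characterization just before Lemma~\ref{lem:1.1} that $\phi$ is bounded iff $\phi^\circ(\Fr)$ lies in a compact subgroup of $\check G$. Then $T^*_{\cpt} := T^* \cap T_{\cpt}$ is automatically a finite union of cosets of compact real subtori, yielding \eqref{eq:1.7}. Parts (c) and (d) are then essentially formal: each coset $T$ is tautologically the complexification of $T_{\cpt}$, and the intersection patterns \eqref{eq:1.2}, \eqref{eq:1.4}, \eqref{eq:1.5} guarantee that the gluing of complex families in $\Lpar^e(G)$ is compatible with the gluing of their compact real forms in $\Lpar^e_{\bdd}(G)$; the descent to $\Phi^e$ is by quotienting by the $\check G$-action, and conjugation by $\check g \in \check G$ transports $(\phi_f, T \setminus T^*, \rho)$ to another family of the same shape via an algebraic isomorphism preserving compact forms.

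The one point I would verify with care, though it is not a serious obstacle, is that $\mc{S}_\phi = \cent(\phi)/\cent(\phi)^\circ$ is genuinely constant along each family, which is what makes ``attach a fixed $\rho \in \Irr(\mc{S}_\phi)$'' meaningful. Since $\mc{S}_\phi$ is defined via the centralizer in $\check G_{sc}$ rather than in $\check G$, I would invoke the standard pullback relationship between centralizers under $\check G_{sc} \to \check G_{der} \hookrightarrow \check G$; once this is in place, constancy of $Z$ in $\check G$ along $T \setminus T^*$ immediately yields constancy of $\cent(\phi)$ and hence of $\mc{S}_\phi$, and every remaining assertion reduces to bookkeeping with the families already introduced.
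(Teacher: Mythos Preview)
Your proposal is correct and matches the paper's approach: Proposition~\ref{prop:1.2} is explicitly presented as a summary of the preceding discussion in Section~\ref{sec:Lpar}, and the pieces you identify (Lemma~\ref{lem:1.1}, the decompositions \eqref{eq:1.1}--\eqref{eq:1.7}, and the stratification by centralizer $Z$) are exactly those assembled there. Your additional remark about checking constancy of $\mc{S}_\phi$ via the pullback along $\check G_{sc} \to \check G$ is a point the paper leaves implicit, so including it is a small improvement rather than a deviation.
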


\begin{example}\label{ex:1.3}
{\rm
We will work out the above families for (enhanced) Langlands parameters for $G = \SL_2 (F)$,
which are trivial on the inertia group $\mathbf I_F$. Put $\check G = \PGL_2 (\C)$ and
let $\check T$ be torus of diagonal elements in $\check G$. The simply connected cover
of $\check G$ is $\check G_{sc} = \SL_2 (\C)$ and we let $\check T_{sc}$ be the torus
of diagonal elements therein. We distinguish the families first by their restriction to
$\SL_2 (\C)$ and then by the possible tori.
\begin{itemize}
\item $\phi \big|_{\SL_2 (\C)} = 1 , \phi_{1f} = 1, T_1 = \check T$.\\
Then $T_1^* = \{ \matje{1}{0}{0}{1} , \matje{1}{0}{0}{-1} \}$ and for all 
$\phi \in \Psi (G,1,T_1 \setminus T_1^*)$ we have $C(\phi) = \check T_{sc}$ and $\mc S_\phi = 1$. 
Moreover
\begin{align*}
& \Phi (G,1,T_1 \setminus T_1^*) = \{ \phi_{\nr,s} : s \in T_1 \setminus T_1^* \} / 
W(\check G,\check T) \cong \big( \C^\times \setminus \{1,-1\} \big) / S_2 , \\
& \Phi (G,1,T_{1\cpt} \setminus T_{1\cpt}^*) \cong 
\{z \in \C^\times : |z| = 1, z \neq 1, z \neq -1 \} / S_2 .
\end{align*} 
\item $\phi \big|_{\SL_2 (\C)} = 1 , \phi_{2f} = 1, T_2 = 1$.\\
Now $T_2^*$ is empty, $C(\phi) = \SL_2 (\C)$ and $\mc S_\phi = 1$. Thus $\Phi (G,1,T_2) = \{1\}$.
\item $\phi \big|_{\SL_2 (\C)} = 1 , \phi_{3f} = \phi_{\nr,\matje{1}{0}{0}{-1}}, T_3 = 1$.\\
In this case $T_3^*$ is empty, $C(\phi) = N_{\check G_{sc}}(\check T_{sc})$ and
$\mc S_\phi = W(\check G_{sc},\check T_{sc}) = S_2$. For every $\rho \in
\Irr (\mc S_\phi)$ we have $\Phi (G,\phi_{3f},T_3,\rho) = \{ (\phi_{3f},\rho)\}$.
\item $\phi \big|_{\SL_2 (\C)}$ the projection $\SL_2 (\C) \to \PGL_2 (\C)$,
$\phi_{4f}$ trivial on $\mathbf W_F$ and $T_4 = 1$.\\
Again $T_4^* = \emptyset$ and there is only one Langlands parameter $\phi = \phi_{4f}$ 
in this family, which satisfies $C(\phi) = Z(\SL_2 (\C)) = \mc S_\phi$. 
For $\rho \in \Irr (\mc S_\phi)$ we obtain $\Phi (G,\phi_{4f},T_4,\rho) = \{ (\phi_{4f},\rho) \}$. 
We remark that for $\rho$ nontrivial $(\phi_{4f},\rho)$ does not parametrize a representation 
of $G$, but one of the essentially unique non-split inner form of $G$.
\end{itemize}
}
\end{example}

\section{From a tempered to a general local Langlands correspondence}
\label{sec:LLC}

In this section we will show how a local Langlands correspondence for $\Irr^t (G)$ can
be extended to $\Irr (G)$. For this purpose we want the enhanced Langlands parameters,
so that every L-packet $\Pi (\phi)$ is split into singletons by $\Irr (\mc S_\phi)$. 
As not all irreducible representations of the S-group $\mc S_\phi$ need to appear
here, we suppose that the LLC is an injective map from $\Irr (G) \to \Phi^e (G)$.
Of course we need to impose additional conditions on this LLC, which we discuss now.

Recall the algebraic families of irreducible $G$-representations and of enhanced Langlands 
parameters from Sections \ref{sec:famrep} and \ref{sec:Lpar}.
We would like to say that via the local Langlands correspondence every algebraic family 
on one side is in bijection with an algebraic family on
the other side. Unfortunately this is not true in general, because our algebraic families 
need not be maximal. In both $\Lpar (G,\phi_f,T \setminus T^*,\rho)$ and 
$\Irr_{M,X \setminus X^*,\rho}(G)$ it is possible that some points of $T^*$ (resp. $X^*$) 
have a larger centralizer in $\check G$ (resp. in $W(\mc O)$), but the same S-groups 
(resp. R-groups) as points of $T \setminus T^*$ (resp. $X \setminus X^*$). Then the 
algebraic family can be extended to a larger subvariety. This behaviour is very common, it 
occurs for most reductive $p$-adic groups. We could 
overcome this problem by adjusting the definitions of $\Lpar (G,\phi_f,T \setminus T^*,\rho)$ 
and $\Irr_{M,X \setminus X^*,\rho}(G)$ so that they include such points of $T$ or $X$. 

However, that would still not imply that our algebraic families are maximal. 
One reason is that $\cent (\phi_{f,t})$ could be larger than $\cent (\phi)$ for $\phi \in \Lpar 
(G,\phi_f,T \setminus T^*)$, but that the subsets of $\Irr (\mc S_{\phi_{f,t}})$ and 
$\Irr (\mc S_\phi)$ that are relevant for the LLC could nevertheless be in natural bijection. 
Even more subtly, it is conceivable that $\mf R_{\ochi}$ is strictly larger than the R-groups 
associated to $M,X \setminus X^*$, but still there exists a 
$\rho_{\ochi} \in \Irr (\mf R_{\ochi},\kappa_{\ochi})$ such that $L(M,\ochi,\rho_{\ochi})$ 
fits in a natural way in $\Irr_{M,X \setminus X^*,\rho}(G)$. Maybe such situations 
could be excluded with more precise conventions and some additional work. 

We prefer to deal with this by proving two versions of our extension theorem: one that 
covers all situations which can theoretically arise inside the framework of the previous
sections, and a more elegant version which works under slightly stronger conditions.

For the first version we assume only that every algebraic family, of the form 
described in Sections \ref{sec:famrep} and \ref{sec:Lpar}, is in correspondence with finitely 
many algebraic families, also as in Sections \ref{sec:famrep} and \ref{sec:Lpar}, on the other 
side (possibly minus some subfamilies of smaller dimension). 

\begin{thm}\label{thm:4.1}
Let a tempered local Langlands correspondence
\[
LL_G^t : \Irr^t (G) \to \Phi^e_{\bdd}(G) 
\]
be given. Suppose that for every algebraic family of irreducible tempered\\
$G$-representations $\Irr_{M,X_{\cpt} \setminus X_{\cpt}^* ,\rho}(G)$ 
as in \eqref{eq:2.8}, there exist
\begin{enumerate}
\item finitely many algebraic families of enhanced bounded Langlands parameters 
$\Lpar (G,\phi_f,T_{i,\cpt} \setminus T_{i,\cpt}^*,\rho_i)$ as in \eqref{eq:1.7};
\item for every $i$, a coset $X_{i,\cpt}$ of a compact subtorus of $X_{\cpt}$ and an 
isomorphism of real algebraic varieties $\psi_i : X_{i,\cpt} \to T_{i,\cpt}$;
\item an injection $\psi : X_{\cpt} \setminus X^*_{\cpt} \to \bigsqcup_i T_{i,\cpt} 
\setminus T^*_{i,\cpt}$;
\end{enumerate}
such that $\psi (\ochi) = \psi_i (\ochi)$ for $\ochi \in \psi_i^{-1}(T_{i,\cpt} \setminus 
T^*_{i,\cpt}) \cap X_{\cpt} \setminus X^*_{\cpt}$, and 
\[
(\phi_{f,\psi (\ochi)},\rho_i) \in \Lpar^e_{\bdd}(G) \quad \text{represents} \quad 
LL_G^t (\pi (M,\ochi,\rho)) .
\]
Then $LL_G^t$ can be extended in a unique way to a map 
\[
LL_G : \Irr (G) \to \Phi^e (G)
\]
such that
\begin{enumerate}
\item the image of $LL_G$ is the complexification of $LL_G^t (\Irr^t (G))$
in the sense of Proposition \ref{prop:1.2};
\item the above conditions hold without the subscripts $\cpt$.
\end{enumerate}
Furthermore $LL_G$ is injective is $LL_G^t$ is so.
\end{thm}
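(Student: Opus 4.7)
The strategy is to exploit the parallel complexification structures on $\Irr(G)$ (Proposition \ref{prop:2.6}) and $\Phi^e(G)$ (Proposition \ref{prop:1.2}), matching them family by family. Fix an algebraic family $\Irr_{M,X\setminus X^*,\rho}(G)$ as in \eqref{eq:2.8}; by Proposition \ref{prop:2.6}.c its tempered real form is $\Irr_{M,X_{\cpt}\setminus X^*_{\cpt},\rho}(G)$. The hypothesis supplies finitely many algebraic families of bounded parameters $\Lpar(G,\phi_f,T_{i,\cpt}\setminus T^*_{i,\cpt},\rho_i)$, cosets $X_{i,\cpt}$ of compact subtori of $X_{\cpt}$, real-algebraic isomorphisms $\psi_i : X_{i,\cpt} \to T_{i,\cpt}$, and an injection $\psi$ agreeing with each $\psi_i$ where defined. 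The plan is then to complexify this data piecewise, patch it together, and transport it through the Langlands-quotient bijection of Theorem \ref{thm:2.5}.

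The first step is the complexification. A coset of a compact real algebraic torus has a unique complexification as a coset of a complex algebraic torus, and an isomorphism between two such cosets extends uniquely to an isomorphism of the complexifications (such a map is determined by a translation together with an isomorphism of character lattices, which are the same for a compact torus and its complex form). This produces canonical complex extensions $\tilde\psi_i : X_i \to T_i$ of the $\psi_i$. The agreement $\psi_i = \psi_j$ on real overlaps $X_{i,\cpt}\cap X_{j,\cpt}$ then propagates to $\tilde\psi_i=\tilde\psi_j$ on $X_i\cap X_j$ by Zariski density of $X_{i,\cpt}$ in $X_i$, so the $\tilde\psi_i$ glue (after possibly enlarging each $T_i^*$ by a lower-dimensional subvariety to absorb loci invisible on the compact form) into a complex injection $\tilde\psi : X\setminus X^* \to \bigsqcup_i T_i\setminus T_i^*$.

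With $\tilde\psi$ in hand, I would define $LL_G$ on each family by
\[
LL_G(L(M,\ochi,\rho)) \;:=\; [(\phi_{f,\tilde\psi(\ochi)},\rho_i)] \in \Phi^e(G),
\]
where $i$ is chosen so that $\tilde\psi(\ochi) \in T_i\setminus T_i^*$. Independence of the choice of representative $(M,\ochi)$ (Theorem \ref{thm:2.2}.b) follows because $W(\mc O)_{X,\rho}$-equivalence on the representation side corresponds on the real part to the $\check G$-equivalence that makes $LL_G^t$ well-defined in $\Phi^e_{\bdd}(G)$; this identity of regular functions extends from $X_{\cpt}$ to $X$ by Zariski density, using that the $\check G$-action preserves the complexification structure (Proposition \ref{prop:1.2}.d). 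Properties (i) and (ii) of the theorem hold by construction of $\tilde\psi$, and uniqueness of $LL_G$ is forced by uniqueness of the complex extension of each $\psi_i$. For injectivity: if $LL_G(\pi_1)=LL_G(\pi_2)$, both parameters lie in the same family $\Lpar(G,\phi_f,T\setminus T^*,\rho_i)$, so by the injective family-to-family correspondence both $\pi_j$ must come from the same $\Irr_{M,X\setminus X^*,\rho}(G)$; within this family, injectivity of each $\tilde\psi_i$ combined with injectivity of $LL_G^t$ on the real fibres of Proposition \ref{prop:2.6}.b forces $\pi_1=\pi_2$.

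The main obstacle is Step 2: making the gluing of the $\tilde\psi_i$ genuinely canonical, and ensuring that complexification does not introduce spurious coincidences that identify Langlands quotients attached to distinct enhanced parameters. This relies on showing that the parametrization of Proposition \ref{prop:2.6}.b is truly algebraic over all of $X\setminus X^*$ — equivalently, that the normalized intertwining operators and the 2-cocycles $\kappa_{\ochi}$ vary rationally in $\chi$ — which is exactly the content of Lemma \ref{lem:2.4} combined with Theorem \ref{thm:2.5}. Once this rigidity is in place, the verification of the three conclusions is routine bookkeeping.
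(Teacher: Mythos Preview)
Your overall architecture matches the paper's: complexify each $\psi_i$, assemble a map on the non-tempered family, and then verify well-definedness and injectivity. But the execution of the last two steps has a real gap.

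First, the ``gluing'' discussion is confused. The target $\bigsqcup_i T_{i,\cpt}\setminus T^*_{i,\cpt}$ is a \emph{disjoint} union, so there is no condition $\psi_i=\psi_j$ on overlaps to propagate; the complex extension is simply defined piecewise by the same formula as in the hypothesis, which is exactly what the paper does.

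More seriously, your well-definedness and injectivity arguments invoke ``Zariski density'' to transport $\check G$-conjugacy from $X_{\cpt}$ to $X$, but $\check G$-conjugacy of enhanced parameters is not an identity of regular functions on $X$. Knowing that $LL'_G(M,\ochi,\rho)$ and $LL'_G(M,w\cdot\ochi,w\rho)$ are $\check G$-conjugate for each $\ochi\in X_{\cpt}$ separately does not give a polynomial equation you can analytically continue. The paper's device is different and concrete: given a non-unitary $\chi$, consider the one-parameter path $z\mapsto \chi\,|\chi|^{z}$. For $z\in i\R-1$ the twist is unitary, so the hypothesis applies; by continuity of the family one finds a \emph{single} $\check g\in\check G$ conjugating the two parameters for all such $z$, and then this algebraic identity (now with $\check g$ fixed) extends to all $z\in\C$, in particular to $z=0$. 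The same deformation handles injectivity: if $LL_G(\pi_1)=LL_G(\pi_2)$, pass to $|\phi_j(\Fr)|$ to see that the conjugacy persists along the whole $|\chi|^z$-line, land in the tempered range, apply injectivity of $LL_G^t$ there, and then use Proposition~\ref{prop:2.6}.b to propagate back. Your argument that ``both parameters lie in the same family'' is not justified without this reduction: $\check G$-conjugacy can in principle relate distinct families, and it is precisely the deformation to the unitary locus that rules this out.
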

\begin{proof}
By complexification $\psi_i$ extends to an isomorphism of complex algebraic varieties
$\psi_i : X_i \to T_i$. Hence we can extend $\psi$ to an injection
\begin{align*}
& \psi : X \setminus X^* \to \bigsqcup\nolimits_i T_i \setminus T^*_i ,\\ 
& \psi (\ochi) := \psi_i (\ochi) \quad \text{for} \quad
\ochi \in \psi_i^{-1}(T_i \setminus T_i^*) \cap X \setminus X^* .
\end{align*}
Using this we put
\[
LL'_G (M,\ochi,\rho) = (\phi_{f,\psi (\ochi)},\rho_i) \in \Lpar^e (G)
\quad \text{for} \quad \ochi \in \psi^{-1}(T_i \setminus T_i^*) .
\]
Notice that the argument is not a $G$-representation, but a parameter for that.
By assumption $LL'_G (M,\ochi,\rho)$ represents $LL_G^t (\pi(M,\ochi,\rho))$ for  
$\chi \in X_{\unr}(M)$. We want $LL'_G$ to descend to a map 
$\Irr (G) \to \Phi^e (G)$ via Theorem \ref{thm:2.5}.b.
Let us agree to use only one $M$ from every conjugacy class of Levi subgroups of $G$.
In view of Theorem \ref{thm:2.2} it suffices to check that 
\[
LL'_G (L(M,\ochi,\rho)) \text{ is } \check G \text{-conjugate to } 
LL'_G (L(M,w(\ochi),w \rho)) 
\]
for all $w \in W(M)$. By construction this holds if $\chi \in X_{\unr}(M)$. 
Otherwise $|\chi| \in X_{\nr}(M)$ is of infinite order and 
\[
L(M,\ochi \, |\chi |^z,\rho) \cong L(M,w(\ochi \, |\chi |^z),w \rho)
\quad \text{for all } z \in \C . 
\]
For $z \in i \R - 1 ,\; \chi \, |\chi |^z$ is unitary and
\[
LL'_G (M,\ochi \, |\chi |^z,\rho) \text{ is } \check G \text{-conjugate to } 
LL'_G (M,w(\ochi \, |\chi|^z),w \rho) 
\]
because both represent $LL_G^t (\pi (M,\ochi \, |\chi |^z ,\rho))$. These objects vary 
continuously with $z$, so we can find one element $\check g \in \check G$
which conjugates them for all $z \in i \R - 1$ simultaneously. Then $\check g$ actually
works for all $z \in \C$, and in particular for $\ochi$. We conclude that 
$LL'_G$ induces a well-defined map $LL_G : \Irr (G) \to \Phi^e (G)$.

By construction $LL_G$ has all the properties described in the theorem, only the
claim on injectivity is not clear yet. Suppose that 
\[
\phi_1 = LL'_G (M,\omega_1 \otimes \chi_1,\rho_1) \quad \text{and} \quad
\phi_2 = LL'_G (M,\omega_2 \otimes \chi_2,\rho_2)
\]
are conjugate by some element $\check{g}' \in \check G$. Then
\[
|\phi_1 (\Fr)| = |\psi_1 (\omega_1 \otimes \chi_1)| \quad \text {is } 
\check G \text{-conjugate to} \quad |\phi_2 (\Fr)| = |\psi_2 (\omega_2 \otimes \chi_2)| ,
\]
by the same element $\check{g}'$. Hence
\[
LL'_G (M,\omega_1 \otimes \chi_1 |\chi_1 |^z,\rho_1) \quad \text{is } \check G 
\text{-conjugate to} \quad LL'_G (M,\omega_2 \otimes \chi_2 |\chi_2 |^z,\rho_2)
\]
for all $z \in \C$. The injectivity of $LL_G^t$ implies 
\[
L(M,\omega_1 \otimes \chi_1 |\chi_1 |^z,\rho_1) \cong L(M,w(\omega_2 \otimes 
\chi_2 |\chi_2 |^z),\rho_2) \quad \text{for all } z \in i \R - 1.
\]
Proposition \ref{prop:2.6}.b shows that this holds for all $z \in \C$, and
in particular for $z = 0$.
\end{proof}

For a cleaner version of this theorem we summarize the essence of algebraic 
families of irreducible representations in shorter terminology. 
Let $\pi \in \Irr_{M,\omega}(G)$ with $\omega \in \Irr (M)$ square-integrable
modulo centre. For $\chi \in X_{\nr}(M)$ we say that $\pi \otimes \chi$ is
well-defined if there exists a path $t \mapsto \chi_t$ in $X_{\nr}(M)$ with
$\chi_0 = 1$ and $\chi_1 = \chi$, such that there is a canonical isomorphism
$\mf R_{\omega \otimes \chi_t} \cong \mf R_\omega$ for all $t$. This definition
makes sense by Lemma \ref{lem:2.4}, while Proposition \ref{prop:2.6} shows how
$\pi \otimes \chi$ can be constructed. In fact the $\pi \otimes \chi$ which are
well-defined in this sense are precisely the members of a family of representations
as in \eqref{eq:2.8}. Notice also that this convention generalizes the usual
definition of $\pi \otimes \chi$ for $\chi \in X_{\nr}(G)$.

In the above setting there is an inclusion $\check M \to \check G$, unique up
conjugation. We recall a desirable property of the local Langlands correspondence 
from \cite[\S 10]{Bor}: the Langlands parameter of $\pi$ is that of
$\omega$, composed with the map $\check M \to \check G$. Equivalently, it is conjectured
that $\pi$ and $\omega$ have the same Langlands parameter up to conjugation by $\check G$.

The unramified character $\chi$ of $M$ can be regarded as a character of the torus
$M / M_{der}$, whose complex dual group is $Z(\check M)^\circ$. Via the LLC for tori it 
determines a smooth homomorphism 
\[
\phi_\chi : \mathbf{W}_F \to Z(\check M)^\circ \rtimes \mathbf{W}_F ,
\] 
see \cite[\S 8.5 and \S 9]{Bor}. Define $\hat \chi : \mathbf{W}_F \to Z(\check M)^\circ$ 
as the composition of $\phi_\chi$ with the projection on the first coordinate. We extend 
$\hat \chi$ to $\mathbf{W}_F \times \SL_2 (C)$ by making it trivial on $\SL_2 (\C)$.

\begin{thm}\label{thm:4.2}
Suppose that a tempered local Langlands correspondence for $G$ is given as an
injective map
\[
LL_G^t : \Irr^t (G) \to \Phi^e_{\bdd}(G) .
\]
Assume that for all $\pi \in \Irr_{M,\omega}(G)$ we can find
a representative $(\phi_\pi ,\rho_\pi ) \in \Psi^e_{\bdd}(M)$ for $LL_G^t (\pi)$ such that,
whenever $\chi \in X_{\unr}(M)$ and $\pi \otimes \chi$ is well-defined (in the above sense):
\begin{itemize}
\item there is a canonical isomorphism $\alpha_\chi : \mc S_{\phi_\pi} \to 
\mc S_{\phi_{\pi} \hat{\chi}}$;
\item $(\phi_{\pi} \hat{\chi},\alpha_\chi^* \rho_\pi) \in \Psi^e_{\bdd} (M)$ represents 
$LL_G^t (\pi \otimes \chi)$.
\end{itemize} 
Then $LL_G^t$ can be extended in a canonical way to an injection
\[
LL_G : \Irr (G) \to \Phi^e (G)
\]
which fulfills the above conditions for all $\chi \in X_{\nr}(M)$ such that 
$\pi \otimes \chi$ is well-defined.
\end{thm}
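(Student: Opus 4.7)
The plan is to deduce Theorem \ref{thm:4.2} from Theorem \ref{thm:4.1} by showing that the unramified-twist compatibility in the hypothesis translates directly into the matching-of-algebraic-families condition of Theorem \ref{thm:4.1}. Fix an algebraic family $\Irr_{M, X_{\cpt} \setminus X_{\cpt}^*, \rho}(G)$ as in \eqref{eq:2.9}. By Proposition \ref{prop:2.6}, choosing a base point $\omega_0 \in X_{\cpt}$, every member has the form $\pi(M, \omega_0 \otimes \chi, \rho)$ with $\omega_0 \otimes \chi$ varying over $X_{\cpt} \setminus X_{\cpt}^*$. Setting $\pi = \pi(M, \omega_0, \rho) \in \Irr^t(G)$, Lemma \ref{lem:2.4} shows that $\pi \otimes \chi$ is well-defined on this whole set and equals $\pi(M, \omega_0 \otimes \chi, \rho)$. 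Taking $(\phi_\pi, \rho_\pi) = LL_G^t(\pi)$ as in the hypothesis, we obtain $LL_G^t(\pi \otimes \chi) = (\phi_\pi \hat{\chi}, \alpha_\chi^* \rho_\pi)$ for every $\chi \in X_{\unr}(M)$ in the family.

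Next I would identify the dual side. The map $\chi \mapsto \hat{\chi}$ is an algebraic group homomorphism from $X_{\nr}(M)$ to the smooth homomorphisms $\mathbf{W}_F \to Z(\check M)^\circ$, unitary exactly on $X_{\unr}(M)$. Composing with evaluation at $\Fr$ yields an algebraic morphism into $Z(\check M)^\circ \subset Z_{\check G}(\mathrm{im}\,\phi_{\pi, f})^\circ$ whose image is a coset of a subtorus $T$, with $X_{\cpt}$ mapping onto $T_{\cpt}$. This furnishes the isomorphism $\psi_i : X_{\cpt} \to T_{\cpt}$ demanded in Theorem \ref{thm:4.1} and matches the tempered family to the family $\Lpar(G, \phi_{\pi, f}, T_{\cpt} \setminus T_{\cpt}^*, \alpha_\chi^* \rho_\pi)$ of \eqref{eq:1.7}. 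The different $W(\mc O)_X$-orbits of possible base points contribute the ``finitely many'' families required in condition (1) of Theorem \ref{thm:4.1}. Applying that theorem yields an injective extension $LL_G : \Irr(G) \to \Phi^e(G)$.

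It remains to upgrade this to the sharpened conclusion of Theorem \ref{thm:4.2}. For a general $\chi \in X_{\nr}(M)$ such that $\pi \otimes \chi$ is well-defined, the S-group $\mc S_{\phi_\pi \hat{\chi}}$ is a finite subquotient of $\check G_{sc}$ depending algebraically on $\chi$, hence is locally constant on the connected region where the twist remains well-defined. Consequently the canonical isomorphism $\alpha_\chi$ given on the unitary slice extends uniquely by continuity to all such $\chi$, and the parameter $(\phi_\pi \hat{\chi}, \alpha_\chi^* \rho_\pi)$ depends algebraically on $\chi$, so it agrees with $LL_G(\pi \otimes \chi)$ by construction of the extension. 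Canonicity is then automatic: every $\pi' \in \Irr(G)$ may be written as $\pi \otimes \chi$ for some tempered $\pi$ and some $\chi \in X_{\nr}(M)$ with the twist well-defined, so condition (ii) forces the value of $LL_G(\pi')$; independence of the chosen base point $\omega_0$ follows from the multiplicativity $\widehat{\chi_1 \chi_2} = \hat{\chi}_1 \hat{\chi}_2$ and the unitary hypothesis applied twice.

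The main obstacle I foresee is not the invocation of Theorem \ref{thm:4.1} itself but the monodromy check that $\alpha_\chi$, extended by continuity off the unitary slice, is genuinely independent of the path chosen inside the connected algebraic family. This reduces to compatibility between the algebraic family of intertwining operators (Lemma \ref{lem:2.4}) and the algebraic family of S-groups on a single rational structure on $\mc O_\C$; I expect it to be handled by the same kind of analytic-continuation argument that was used in the proof of Theorem \ref{thm:4.1} to push identities from the unitary subset to all of $X_{\nr}(M)$.
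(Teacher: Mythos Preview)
Your overall strategy---reduce Theorem \ref{thm:4.2} to Theorem \ref{thm:4.1} by translating the unramified-twist hypothesis into the family-matching hypothesis---is exactly what the paper does, and your first paragraph sets this up correctly.

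The gap is in your second paragraph, where you misidentify the source of the ``finitely many'' families required by condition (1) of Theorem \ref{thm:4.1}. Different $W(\mc O)_X$-orbits of base points in $X_{\cpt}$ do not produce distinct target families: any two base points give the same $X_{\cpt}$ and the same image torus $T_{\cpt}$ under $\chi \mapsto \hat\chi(\Fr)$. The real issue is that the families $\Lpar(G,\phi_f,T_{\cpt}\setminus T_{\cpt}^*,\rho)$ of \eqref{eq:1.7} are defined so that the full centralizer $Z_{\check G}(\mathrm{im}\,\phi)$ is \emph{constant} on $T_{\cpt}\setminus T_{\cpt}^*$. The hypothesis of Theorem \ref{thm:4.2} only gives you a canonical isomorphism of the component groups $\mc S_{\phi_\pi}\cong\mc S_{\phi_\pi\hat\chi}$; it does not force $C(\phi_\pi\hat\chi)$ itself to be constant as $\chi$ varies over $X_{\cpt}\setminus X_{\cpt}^*$. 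So the image of your map $\psi$ need not sit inside a single family of the shape \eqref{eq:1.7}. The paper's fix is to subdivide the image torus according to the finitely many possible values of $C(\phi_\pi\hat\chi)$, obtaining the pieces $T_{i,\cpt}\setminus T_{i,\cpt}^*$; on each piece the centralizer is literally constant, and the enhancement $\rho_i$ is then pinned down by the second bullet of the hypothesis.

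This same subdivision disposes of your monodromy worry in the last paragraph: once you are on a stratum where $C(\phi_\pi\hat\chi)$ is constant, the groups $\mc S_{\phi_\pi\hat\chi}$ are not merely canonically isomorphic but equal, so $\alpha_\chi$ is the identity and its extension to the complexified stratum $T_i\setminus T_i^*$ is trivially path-independent. No separate analytic-continuation argument is needed.
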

\begin{proof}
It suffices to check that the conditions of Theorem \ref{thm:4.1} are fulfilled.
Consider a family $\Irr_{M,X_{\cpt} \setminus X_{\cpt}^* ,\rho}(G)$ and an element
$\omega \in X_{\cpt} \setminus X_{\cpt}^* \subset \Irr^t (M)$. The assumptions enable us
to find a family of Langlands parameters $\Psi (G,\phi_f,T_{\cpt} \setminus T_{\cpt}^*)$
which is in bijection with $\Irr_{M,X_{\cpt} \setminus X_{\cpt}^* ,\rho}(G)$ via
$\pi \otimes \chi \to \phi_\pi \hat{\chi}$. Divide $\Psi (G,\phi_f,T_{\cpt} \setminus 
T_{\cpt}^*)$ into finitely many families of enhanced Langlands parameters 
$\Psi (G,\phi_f,T_{i,\cpt} \setminus T_{i,\cpt}^*,\rho_i)$ according to the different
possibilities for $C(\phi_\pi \hat \chi)$. Here the additional ingredient $\rho_i$ is 
uniquely determined by the second assumption. Now we can apply Theorem \ref{thm:4.1}.
\end{proof}

The conditions of Theorem \ref{thm:4.2} hold in all cases which the authors 
checked, and it seems likely that they are valid for any $p$-adic group $G$ 
(if a tempered local Langlands correspondence exists for $G$). For example 
they hold for all
inner forms of $\GL_n (F)$, because then the component groups and $\mf R$-groups are 
trivial and compatibility with unramified twists is built in the LLC. In fact the usual
LLC for $\GL_n (F)$, denoted $\mathrm{rec}_{F,n}$, fulfills the conditions of Theorem 
\ref{thm:4.2} for non-tempered representations as well. So if we start with $\mathrm{rec}_{F,n} 
\big|_{\Irr^t (\GL_n (F))}$, then Theorem \ref{thm:4.2} yields $\mathrm{rec}_{F,n}$.

The hypotheses are also fulfilled for inner forms of $\SL_n (F)$, as can be deduced from 
\cite{HiSa}. Furthermore both the work of Arthur \cite{Art4} on quasi-split 
orthogonal and symplectic groups and the work on Mok on quasi-split unitary groups
\cite{Mok} should fit with Theorem \ref{thm:4.2}. Indeed, the first
condition in Theorem \ref{thm:4.2} will follow from the comparison of the
analytic and geometric R-groups for tempered representations (see the next
section),  and the second condition should be a consequence of the
functoriality of the twisted endoscopic transfers used in the construction
of the representations of the classical and of the unitary groups.

\section{Geometric R-groups}
\label{sec:geometric}

In the next section we will explain why Theorem \ref{thm:4.2} applies to principal series 
representations of a split reductive $p$-adic group. To that end we first have to improve our 
understanding of the relations between $\mc S_\phi$ and the R-groups from Section 
\ref{sec:Rgroup}, that is, between the analytic and the geometric R-groups. We discuss
this for a general reductive $p$-adic group $G$.

Given $\phi \in \Psi (G)$, let $M$ be a Levi subgroup of $G$ 
such that the image of $\phi$ is contained in ${}^{L}M$, but not in any smaller Levi
subgroup of ${}^{L}G$. We can regard $\mc S_\phi^M$ (that is, $\mc S_\phi$ for $\phi$ 
considered as a Langlands parameter for $M$) as a normal subgroup of $\mc S_\phi$, so the
conjugation action of $\mc S_\phi$ on $\mc S_\phi^M$ induces an action of the quotient
$\mc S_\phi / \mc S_\phi^M$ on $\Irr (\mc S_\phi^M)$. 

\begin{defn} \label{defn:geomRgroup} 
Let $\mf R_{\phi,\sigma}$ be the stabilizer of $\sigma \in \Irr (\mc S_\phi^M)$ in 
$\mc S_\phi / \mc S_\phi^M$. The group $\mf R_{\phi,\sigma}$ is the geometric R-group 
attached to $(\phi,\sigma)$.
\end{defn}

Assume that a local Langlands correspondence for essentially square-integrable 
representations of $M$ is known, and that 
$(\phi,\sigma) \in \Psi^e (M)$ corresponds to $\ochi \in \Irr (M)$.
The following conjecture extends to the 
non-tempered context a conjecture that was stated by Arthur in \cite{Art0}. 
\begin{conj} \label{eq:5.8}
$\mf R_{\phi,\sigma}$ is isomorphic to $\mf R_{\ochi}$.
\end{conj}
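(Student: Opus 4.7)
The plan is to reduce the conjecture to its tempered specialization, which is essentially Arthur's conjectural identification of the Knapp--Stein R-group with a quotient of $\mc S_\phi$ \cite{Art0}. The reduction exploits the complexification picture developed in Sections~\ref{sec:Rgroup} and~\ref{sec:Lpar}: both $\mf R_{\ochi}$ and $\mf R_{\phi,\sigma}$ should be locally constant as $\chi$ varies in $X_{\nr}(M)$, so once they are matched at a single tempered point of the family they must match everywhere.

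More concretely, I would argue as follows. Fix a path $t \mapsto \chi_t$ in $X_{\nr}(M)$ from $\chi_0 \in X_{\unr}(M)$ to $\chi_1 = \chi$ staying inside a connected set $Y$ as in Lemma~\ref{lem:2.4}; this is possible by Proposition~\ref{prop:2.6}(a). On the analytic side, Lemma~\ref{lem:2.4}(a) gives a canonical isomorphism $\mf R_{\omega \otimes \chi_t} \cong \mf R_\omega$ along the path. On the dual side, the corresponding family of parameters is $\phi \hat\chi_t$ in the notation of Theorem~\ref{thm:4.2}; since $\hat\chi_t$ takes values in $Z(\check M)^\circ$, conjugation by $\hat\chi_t(w)$ acts trivially on $\check M$, so the centralizer of $\phi \hat\chi_t$ inside $\check M_{sc}$ is locally constant in $t$, yielding a canonical identification $\mc S^M_{\phi \hat\chi_t} \cong \mc S^M_\phi$. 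The same argument, combined with the decomposition of $\Phi^e(G)$ into algebraic families from Proposition~\ref{prop:1.2}, gives a canonical isomorphism $\mc S_{\phi \hat\chi_t}/\mc S^M_{\phi \hat\chi_t} \cong \mc S_\phi / \mc S^M_\phi$ on the complement of a finite set of ``jump'' points in the family. Through this isomorphism the stabilizer of $\sigma$ is preserved, so $\mf R_{\phi \hat\chi_t, \sigma} \cong \mf R_{\phi,\sigma}$, and everything reduces to the base case $\chi_0 \in X_{\unr}(M)$.

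A subsidiary point to check is that the canonical isomorphism $\mc S_{\phi \hat\chi}/\mc S^M_{\phi \hat\chi} \cong \mc S_\phi/\mc S^M_\phi$ matches, under the LLC for essentially square-integrable representations of Levi subgroups, the equally canonical isomorphism $W_{\ochi} \cong W_\omega$ provided by Lemma~\ref{lem:2.4}; this should follow from the functoriality of the LLC on $M$ under unramified twists, together with the fact that both quotients record which elements of $W(M) = N_G(M)/M$ fix the parameter/representation. Granting this, the decomposition $W_{\ochi} = \mf R_{\ochi} \ltimes W(R_{\ochi})$ matches a corresponding decomposition of $\mc S_\phi/\mc S^M_\phi$ on the dual side, and the R-group condition reads identically on both sides.

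The main obstacle is the base tempered case itself: matching the geometric R-group defined via component groups of centralizers in $\check G$ with Silberger's analytic R-group defined via zeros of the Plancherel $\mu$-functions is Arthur's conjecture from \cite{Art0}, known for $\GL_n$ (trivially), for inner forms of $\SL_n(F)$ via \cite{HiSa}, and expected to follow from the frameworks of \cite{Art4,Mok} for quasi-split classical and unitary groups, but open in general. Once this input is accepted, the passage from tempered to arbitrary $\ochi$ is a formal consequence of the twist-stability of both R-groups, as outlined above.
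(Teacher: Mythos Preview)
The statement is a \emph{conjecture}, not a theorem, and the paper does not prove it. What the paper does say, immediately after stating it, is precisely the reduction you outline: ``In view of Propositions~\ref{prop:2.6} and~\ref{prop:1.2}, the validity of Conjecture~\ref{eq:5.8} for all bounded Langlands parameters $\phi \in \Phi_{\bdd}(G)$ would imply the validity for all $\phi \in \Phi(G)$.'' Your proposal is a fleshed-out version of this one-sentence remark: you use Lemma~\ref{lem:2.4} and Proposition~\ref{prop:2.6} to rigidify $\mf R_{\ochi}$ along a connected family, and the algebraic-family structure of Proposition~\ref{prop:1.2} to rigidify $\mf R_{\phi,\sigma}$ along the dual family, then match them at a unitary point. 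You also correctly identify that the tempered base case is Arthur's conjecture from \cite{Art0}, known in the cases the paper lists (inner forms of $\SL_n$, classical and unitary groups, and the situations treated in \cite{Ree1}) but open in general.

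So your analysis is in complete agreement with the paper's treatment: there is no proof to compare against, only a reduction to the tempered case, and you have that reduction right. One small caveat worth recording: the step where you assert a canonical isomorphism $\mc S_{\phi\hat\chi_t}/\mc S^M_{\phi\hat\chi_t}\cong \mc S_\phi/\mc S^M_\phi$ ``on the complement of a finite set of jump points'' is exactly what Proposition~\ref{prop:1.2} encodes, but matching those jump points with the corresponding jump locus $X^*$ on the analytic side already presupposes some compatibility between the LLC for $M$ and unramified twists (the hypothesis of Theorem~\ref{thm:4.2}). The paper implicitly folds this into the phrase ``in view of Propositions~\ref{prop:2.6} and~\ref{prop:1.2}''; you are right to flag it as a subsidiary point that needs the LLC on $M$ to behave well.
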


Conjecture \ref{eq:5.8} for tempered representations (Arthur's conjecture) 
is known to be true,
in the case when $F$ is of characteristic $0$, 
when $G$ is an inner form of $\SL_n (F)$ (see \cite{ChLi,ChGo}), and 
when $G$ is a classical group, including the case of unitary groups, see
\cite{BaGo} and the references therein. It was also studied, and proven in some 
other cases, in \cite[\S 9]{Ree1}. 

In view of Propositions \ref{prop:2.6} and \ref{prop:1.2}, the validity of Conjecture
\ref{eq:5.8} for all bounded Langlands parameters $\phi \in \Phi_{\bdd}(G)$ would imply
the validity for all $\phi \in \Phi (G)$.

\smallskip

On the other hand, the group $\mf R_{\phi,\sigma}$ for $\phi\in\Psi^e_{\bdd}(G)$  
is a special case of the Arthur group $\mf R_{\psi,\sigma}$ where
$\psi\colon \bW_F\times\SL_2(\C)\times\SL_2(\C)\to {}^L G$ is an Arthur
parameter. Ban and Jantzen \cite{BaJa} provided an example in $G=\SO_9(F)$,
involving an Arthur parameter $\psi$ that has non-trivial restriction to the
second copy of $\SL_2(\C)$, for which the cardinality of $\mf R_{\psi,\sigma}$ 
does not coincide with the number of components of the corresponding
parabolically induced representation. However Conjecture \ref{eq:5.8}
still holds in this case, as we will see. 
   
\begin{example} \label{ex:SO9}
{\rm
Ban and Jantzen considered the representation
\[
\pi = \St_{\GL_2 (F)} \times \triv_{\GL_2 (F)} \rtimes 1,
\] 
which is parabolically induced from
the representation $\St_{\GL_2 (F)} \otimes \triv_{\GL_2 (F)}$ of a Levi subgroup 
of the group $G=\SO_9(F)$.
The representation $\St_{\GL_2 (F)} \otimes \triv_{\GL_2 (F)}$ is not essentially
square-integrable, so we want to compare $\pi$ with the parabolically
induced representation
\[
\sigma = \nu^{1/2}\times \nu^{1/2}\times \St_{\GL_2 (F)} \rtimes 1
\]
where $\nu=|\det|_F$, to which our construction do apply. 
The representation $\pi$ has three constituents \cite[Theorem 2.5]{BaJa}:
\[
\pi=Z(\nu^{-1/2},\nu^{-1/2};\tau_1)+
Z(\nu^{-1/2},\nu^{-1/2};\tau_2)+Z(\nu^{-1/2};\cS),
\] 
where $\tau_1$, $\tau_2$, and $\cS$ are irreducible tempered representations of 
$\SO_5(F)$ defined by
\[
\tau_1+\tau_2=\St_{\GL_2}\rtimes 1\quad\text{and}\quad 
\cS=\St_{\GL_2}\rtimes\St_{\SO_3},
\]
and $Z(\nu^{-1/2},\nu^{-1/2};\tau_i)$ is the unique subrepresentation of the parabolically 
induced representation $\nu^{-1/2}\times \nu^{-1/2}\rtimes\tau_i$, while 
$Z(\nu^{-1/2};\cS)$ is the unique subrepresentation of the parabolically induced
representation $\nu^{-1/2}\rtimes\cS=\nu^{-1/2}\times\St_{\GL_2}\rtimes\St_{\SO_3}$.
We have 
\[
Z(\nu^{-1/2},\nu^{-1/2};\tau_i)\in\Irr_{M,\omega\otimes\chi}(G)
\quad\text{and}\quad Z(\nu^{-1/2};\cS)\in\Irr_{M',\omega'\otimes\chi'}(G),
\]
where $M\simeq F^\times\times F^\times\times \GL_2(F)$ and
$\omega\otimes\chi=\nu^{-1/2}\otimes \nu^{-1/2}\otimes\St_{\GL_2}$, while 
$M'\simeq F^\times\times \GL_2(F)\times\SO_3(F)$ and
$\omega'\otimes\chi'=\nu^{-1/2}\otimes\St_{\GL_2}\otimes\St_{\SO_3}$.
Hence both the $Z(\nu^{-1/2},\nu^{-1/2};\tau_i)$ are Langlands
constituents of $\sigma$ and $Z(\nu^{-1/2};\cS)$ falls into a different series,
because it can be obtained via parabolic induction from a square-integrable
representation of a larger parabolic subgroup.

In this example Arthur R-group $\mf R_{\psi,\sigma}$ has four elements, clearly too many
for the packet. If we do the same calculation as in \cite[\S 2.3]{BaJa} with Langlands 
parameters instead of Arthur parameters, then we end up with a geometric $R$-group of 
order $2$, which is also the analytic $R$-group of $\sigma$. 
Section 3.3 of \cite{BaJa} shows that its irreducible representations naturally
parametrize the first two constituents of $\pi$ discussed above.}
\end{example}

\smallskip 

We write $\Irr (\mc S_\phi, \sigma) = \{ \rho \in \Irr (\mc S_\phi) :
\mathrm{Hom}_{\mc S_\phi^M}(\sigma,\rho) \neq 0 \}$. Choose a minimal idempotent $p_\sigma$ 
of $\C [\mc S_\phi^M]$ associated to $\sigma$. Then the algebra $\C [\mc S_\phi] p_\sigma
\C [\mc S_\phi]$ is Morita equivalent to $p_\sigma \C [\mc S_\phi] p_\sigma$ and
the map $V \mapsto p_\sigma V$ induces a bijection
\begin{equation}\label{eq:5.11}
\Irr (\mc S_\phi, \sigma) = \Irr \big( \C [\mc S_\phi] p_\sigma
\C [\mc S_\phi] \big) \to \Irr \big( p_\sigma \C [\mc S_\phi] p_\sigma \big) .
\end{equation}
On the other hand 
\[
p_\sigma \C [\mc S_\phi] p_\sigma \cong \mathrm{End}_{\mc S_\phi}
\big( \C [\mc S_\phi] p_\sigma \big) \cong  \mathrm{End}_{\mc S_\phi} \big(
\C [\mc S_\phi] \otimes_{\C [\mc S_\phi^M]} \C [\mc S_\phi^M] p_\sigma \big)
= \mathrm{End}_{\mc S_\phi} \big( \mathrm{ind}_{\mc S_\phi^M}^{\mc S_\phi} \sigma \big) .
\]
By \cite[(9.1b)]{Ree1} a choice of intertwining operators 
$I_r \in \mathrm{Hom}_{\mc S_\phi^M} (\sigma,r \cdot \sigma)$ for 
$r \in \mf R_{\phi,\sigma}$ gives rise to a 2-cocycle $\kappa_{\phi,\sigma}$ such that
\begin{equation}\label{eq:5.9}
p_\sigma \C [\mc S_\phi] p_\sigma \cong
\mathrm{End}_{\mc S_\phi} \big( \mathrm{ind}_{\mc S_\phi^M}^{\mc S_\phi} \sigma \big) \cong
\C [\mf R_{\phi,\sigma}, \kappa_{\phi,\sigma}] .
\end{equation}
Thus \eqref{eq:5.11} and \eqref{eq:5.9} provide a bijection between $\Irr (\mc S_\phi,\sigma)$ 
and $\Irr \big( \C [\mf R_{\phi,\sigma}, \kappa_{\phi,\sigma}] \big)$. We remark that in 
general this bijection is not natural, as it can depend on the choice of the intertwining
operators $I_r$.

We call $\sigma \in \Irr (\mc S_\phi^M)$ relevant for $M$ if it corresponds to a representation
of $M$ (as opposed to a representation of an inner form of $M$), and we denote the set of
such $\sigma$ by $\Irr_{\rel M}(\mc S_\phi^M)$. The above action of $\mc S_\phi$ on 
$\Irr (\mc S_\phi^M)$ permutes the different $\mc S_\phi^M$-constituents of a representation
of $\mc S_\phi$, so on the $p$-adic side it should correspond to permuting the different
$\omega \in \Irr (M)$ for which $\Irr_{M,\omega}(G)$ contains a fixed representation of $G$.
Therefore $\Irr_{\rel M}(\mc S_\phi^M)$ should be stable under the action of $\mc S_\phi$.
The desirable properties
of the local Langlands correspondence suggest that there are bijections
\begin{multline}\label{eq:5.3}
\Pi_\phi (G) = \bigsqcup_{\ochi \in \Pi_\phi (M) / N_G (M,\phi)} \Irr_{M,\ochi}(G) 
\longleftrightarrow \\ 
\bigsqcup_{\sigma \in \Irr_{\rel M}(\mc S_\phi^M) / \mc S_\phi} \Irr (\mc S_\phi,\sigma) 
\longleftrightarrow \bigsqcup_{\sigma \in \Irr_{\rel M}(\mc S_\phi^M) / \mc S_\phi} 
\Irr \big( \C [\mf R_{\phi,\sigma}, \kappa_{\phi,\sigma}] \big) .
\end{multline}
Furthermore the comparison with \eqref{eq:2.14} suggests that the cocycles 
$\kappa_{\phi,\sigma}$ and $\kappa_{\ochi}$ should be cohomologous via Conjecture 
\eqref{eq:5.8}. In that case \eqref{eq:2.14} and \eqref{eq:5.3} show how Langlands parameters
for essentially square-integrable representations of Levi subgroups $M$ can be used
to produce a LLC for $G$. This fits well with the work of Heiermann \cite{Hei}, 
who proved that under certain conditions a parametrization of supercuspidal representations
gives rise to one for essentially square-integrable representations.

\section{The principal series of a split group}

From now on we assume that $G$ is $F$-split. The local Langlands correspondence for 
irreducible $G$-representations in the principal series was recently completed in 
\cite{ABPS}. It generalizes \cite{KaLu,Ree2} and relies among others on \cite{Roc}.

First we consider the unramified principal series. Let $(\phi,\sigma) \in \Psi^e_{\bdd}(M)$
be elliptic for a Levi subgroup $M \subset G$ and let $\omega \in \Irr (M)$ be the corresponding
square-integrable (modulo centre) representation. The Kazhdan--Lusztig parametrization of 
irreducible Iwahori-spherical $G$-representations \cite{KaLu,Ree2} is compatible with parabolic 
induction in the sense that this operation does not change the first two ingredients of a 
Kazhdan--Lusztig parameter $(s,u,\rho)$. Since $(s,u)$ determines a Langlands parameter, 
all elements of $\Irr_{M,\omega}(G)$ have Langlands parameter $\phi \in \Phi (G)$.

The appropriate component groups for $G$-representations, at least for the principal series, are
\[
Z_{\check G}(\phi) / Z_{\check G}(\phi)^\circ \cong 
\mc S_\phi / \big( \text{image of } Z(\check G_{sc}) \big) .
\] 
(In other words, the subtlety of replacing $\check G$ by its simply connected cover is 
superfluous for split groups.) We denote the $G$-representation attached to $\phi$ in 
\cite{KaLu} by $\pi_G (\phi)$. By construction 
\[
\pi_G (\phi \hat{\chi}) = \pi_G (\phi) \otimes \chi \text{ for all } \chi \in X_{\nr}(G).
\]
In general $\pi_G (\phi)$ is reducible and endowed with a natural action of 
$Z_{\check G}(\phi) / Z_{\check G}(\phi)^\circ$.
The third ingredient of a Kazhdan--Lusztig parameter is an irreducible representation 
$\rho$ of the latter group. It is used to select an irreducible summand $\pi_G (\phi,\rho)$ of 
$\pi_G (\phi)$, by applying $\mathrm{Hom}_{\mc S_\phi}(\rho,-)$. Choose a 
$\sigma \in \Irr (\mc S_\phi^M)$ which appears in the restriction of $\rho$ to $\mc S_\phi^M$. 
With \eqref{eq:5.11} we obtain
\begin{multline}\label{eq:5.1}
\pi_G (\phi,\rho) \cong \mathrm{Hom}_{\mc S_\phi} (\rho, \pi_G (\phi)) =
\mathrm{Hom}_{\C[\mc S_\phi] p_\sigma \C[\mc S_\phi]} (\rho, \pi_G (\phi)) \\
\cong \mathrm{Hom}_{p_\sigma \C[\mc S_\phi] p_\sigma} (p_\sigma \rho , p_\sigma \pi_G (\phi)) .
\end{multline}
By \cite[Theorem 6.2]{KaLu} $\pi_G (\phi) \cong I_P^G (\pi_M (\phi))$ in an 
$\mc S_\phi^M$-equivariant way. Let $\rho_\sigma \in \Irr \big( \C [\mf R_{\phi,\sigma}, 
\kappa_{\phi,\sigma}] \big)$ correspond to $\rho \in \Irr (\mc S_\phi ,\sigma)$ via \eqref{eq:5.11}
and \eqref{eq:5.9}. It follows that the right hand side of \eqref{eq:5.1} is isomorphic to
\[
\mathrm{Hom}_{\C[\mf R_{\phi,\sigma},\kappa_{\phi,\sigma}]} 
\big( \rho_\sigma , I_P^G (p_\sigma \pi_M (\phi)) \big) .
\]
Because $p_\sigma$ acts as a projection of rank one on the vector space underlying $\sigma$, 
it has essentially the same effect as applying $\mathrm{Hom}_{\mc S_\phi^M}(\sigma,-)$. We find 
\begin{multline}\label{eq:5.2}
\pi_G (\phi,\rho) \cong \mathrm{Hom}_{\C[\mf R_{\phi,\sigma},\kappa_{\phi,\sigma}]} 
\big( \rho_\sigma , I_P^G (\mathrm{Hom}_{\mc S_\phi^M} (\sigma,\pi_M (\phi)) \big) \\
= \mathrm{Hom}_{\C[\mf R_{\phi,\sigma},\kappa_{\phi,\sigma}]} 
\big( \rho_\sigma , I_P^G (\pi_M (\phi,\sigma)) \big) .
\end{multline}
Reeder \cite[\S 9]{Ree1} proved that the analytic R-group $\mf R_{\omega}$ is isomorphic to 
the subquotient $\mf R_{\phi,\sigma}$ of $Z_{\check G}(\phi) / Z_{\check G}(\phi)^\circ$, 
and that the 2-cocycles $\kappa_{\phi,\sigma}$ and $\kappa_{\omega}$ are cohomologous.
From this, \eqref{eq:5.2} and Theorem \ref{thm:2.5}.b it is clear that the way $\Irr \big(
Z_{\check G}(\phi) / Z_{\check G}(\phi)^\circ \big)$ is used here is equivalent to the
method with R-groups in Section \ref{sec:Rgroup}. We already knew that 
$\pi_M (\phi \hat{\chi}) = \pi_M (\phi) \otimes \chi$ for all $\chi \in X_{\nr}(M)$, so
the representations $\pi_G (\phi,\rho) \otimes \chi$ can just as well be constructed via
\eqref{eq:5.2}. Consequently the Kazhdan--Lusztig parametrization satisfies 
$\pi_G (\phi \hat{\chi},\rho) \cong \pi_G (\phi,\rho) \otimes \chi$ whenever this is 
well-defined for a $\chi \in X_{\unr}(M)$. 

Strictly speaking, \cite{KaLu} applies only if $\check G$ has simply connected derived group. 
But Reeder's generalization \cite[Theorem 3.5.4]{Ree2} allows us to forget about this condition, 
as can be seen from equations (93) and (94) of \cite{ABPS}. Thus the assumptions of Theorem 
\ref{thm:4.2} are fulfilled for the unramified principal series of a split group.

In fact the Kazhdan--Lusztig--Reeder parametrization also fulfills the conclusion of Theorem
\ref{thm:4.2}. This can be shown by the above argument, combined with some Langlands quotients
at the appropriate places. The latter do not pose any additional problems, because they
form an integral part of the constructions in \cite{KaLu}.

Next we consider a Bernstein component $[T,\chi]_G$ in the principal series, such 
that the group $\check H = Z_{\check G}(\hat \chi)$ is connected. Since
the local Langlands correspondence for the irreducible representations in $[T,\chi]_G$ 
uses \cite{Roc}, we have to assume that the residual characteristic of $F$ satisfies the
mild conditions in \cite[Remark 4.13]{Roc}. According to \cite[Theorem 9.14]{Roc} the 
block of Rep$(G)$ determined by $[T,\chi]_G$ is equivalent with the block of Rep$(H)$
containing the unramified principal series. This equivalence comes from an isomorphism
of Hecke algebras and it preserves all the important structure, like parabolic 
induction and R-groups. It was checked in \cite[\S 4]{Ree2} that the component groups
$Z_{\check G}(\phi) / Z_{\check G}(\phi)^\circ$ are also preserved in the process.
Since the unramified principal series of $H$ fit in the framework Theorem \ref{thm:4.2},
as shown above, so does the Bernstein component $[T,\chi]_G$.

Finally, suppose that $Z_{\check G}(\hat \chi)$ is disconnected, with identity 
component $\check H$ and component group $\Gamma$. Then everything for the Bernstein 
component $[T,\chi]_G$ can be obtained 
from the setting for $H$, by taking the extended quotient (of the second kind) with
respect to the action of $\Gamma$, see \cite[\S 23]{ABPS}. This procedure is 
essentially the same for enhanced Langlands parameters as for $G$-representations,
and therefore it does not disturb the properties of the local Langlands correspondence
used in Theorem \ref{thm:4.2}.

\end{document}